\theoremstyle{plain}
\newtheorem{theorem}{Theorem}[section]
\newtheorem{proposition}[theorem]{Proposition}
\newtheorem{lemma}[theorem]{Lemma}
\numberwithin{equation}{section}
\theoremstyle{definition}
\newtheorem{question}[theorem]{Question}
\newtheorem{remark}[theorem]{Remark}
\newtheorem{example}[theorem]{Example}
\newcommand{\Z}{\mathbb{Z}}
\newcommand{\R}{\mathbb{R}}
\newcommand{\cF}{\mathcal{F}}
\newcommand{\cM}{\mathcal{M}}
\newcommand{\cN}{\mathcal{N}}
\newcommand{\cK}{\mathcal{K}}
\newcommand{\cL}{\mathcal{L}}
\newcommand{\cV}{\mathcal{V}}
\newcommand{\cS}{\mathcal{S}}
\newcommand{\fK}{\mathfrak{K}}
\newcommand{\fL}{\mathfrak{L}}
\renewcommand{\subset}{\subseteq}
\DeclareMathOperator{\lk}{Lk}
\DeclareMathOperator{\proj}{Proj}
\DeclareMathOperator{\wed}{Wed}
\title[Toric manifolds with Picard number 4]{Complete non-singular toric varieties with Picard number 4}
\author{Suyoung Choi}
\address{Department of mathematics, Ajou University, 206, World cup-ro, Yeongtong-gu, Suwon 16499,  Republic of Korea}
\email{schoi@ajou.ac.kr}
\author{Hyeontae Jang}
\address{Department of mathematics, Ajou University, 206, World cup-ro, Yeongtong-gu, Suwon 16499, Republic of Korea}
\email{a24325@ajou.ac.kr}
\author{Mathieu Vall\'ee}
\address{Universit\'e Sorbonne Paris Nord, LIPN, CNRS UMR 7030, F-93430, Villetaneuse, France}
\email{vallee@lipn.fr}
\date{\today}
\subjclass[2020]{57S12, 14M25}
\keywords{Toric variety, classification, Picard number, simplicial sphere, fanlike, neighborly polytope}
\thanks{This research was supported by Basic Science Research Program through the  National Research Foundation of Korea(NRF) (RS-2021-NR060141 and RS-2025-00521982).}
\begin{document}
\begin{abstract}
We classify all complete non-singular toric varieties with Picard number four via a combinatorial framework based on fanlike simplicial spheres and characteristic maps.
This classification yields $59$ fanlike seeds with Picard number four, along with all toric manifolds supported by them.
As a consequence, we resolve a conjecture of Gretenkort, Kleinschmidt, and Sturmfels by presenting the first known examples of toric manifolds supported by neighborly polytopes.
We also answer a question of Batyrev concerning minimal non-faces of such spheres.
\end{abstract}
\maketitle

\section{Introduction}
The classification of algebraic varieties is a fundamental problem in algebraic geometry.
Among these, toric varieties serve as a bridge between algebraic geometry, combinatorics, and topology; see \cite{Buchstaber-Panov2015, Cox-Little-Schenck2011, fulton1993toric, Oda1988} for details.
Their explicit and computable nature makes them valuable as testbeds for illustrating theoretical phenomena and verifying conjectures.

Despite their rich combinatorial structure, the complete classification of complete non-singular toric varieties, simply referred to as \emph{toric manifolds}, remains an open and challenging problem, especially in higher dimensions.
While all complex two-dimensional toric manifolds are well-known to be obtained through successive blow-ups of either the projective plane or Hirzebruch surfaces \cite{Orlik-Raymond1970, Oda1988}, an explicit classification in higher dimensions has yet to be fully understood.

An alternative approach to classification considers the \emph{Picard number}, which measures the rank of the Picard group.
The classification of toric manifolds has been successfully completed up to Picard number~$3$. In their pioneering works, Kleinschmidt \cite{Kleinschmidt1988} classified those with Picard number~$2$, and Batyrev \cite{Batyrev1991} extended the classification to Picard number~$3$.
The \emph{fundamental theorem for toric geometry} establishes a bijection between complex $n$-dimensional toric manifolds with Picard number~$p$ and real $n$~dimensional complete non-singular fans with $n+p$ rays.
Each of these fans is characterized by two components:
\begin{itemize}
  \item an $(n-1)$-dimensional piecewise linear (for short, PL) sphere, called its \emph{underlying complex}, and
  \item a set of $n+p$ primitive ray vectors in $\Z^n$, one for each vertex of the sphere.
\end{itemize}
More precisely, for an $n$-dimensional complete non-singular fan with $n+p$ rays, each cone corresponds to a subset of the rays, and the collection of such subsets forms a sphere.
For clarity, we call a PL~sphere \emph{fanlike} if it arises as the underlying complex of a complete non-singular fan.
Therefore, the classification of complete non-singular toric varieties reduces essentially to the following question;
\emph{Which pairs of fanlike PL~spheres and sets of primitive ray vectors produce complete non-singular fans?}

However, this problem remains highly nontrivial since, even after fixing either the dimension or the Picard number, infinitely many fanlike spheres exist.
A complete characterization of fanlike spheres is not yet known, presenting a significant barrier to classification.

Numerous studies such as \cite{Batyrev1999, Delaunay2005, Cox-von2009, Suyama2014, Suyama2015, Bogart-Haase-Hering2015, Baralic-Milenkovic2022, Ayzenberg2024} have investigated necessary conditions for a PL~sphere to be fanlike, but complete characterizations have remained out of reach, leaving many important open questions and conjectures.
In particular, Batyrev \cite{Batyrev1991} suggested an upper bound on the number of minimal non-faces of a fanlike sphere, and Gretenkort, Kleinschmidt, and Sturmfels \cite{Gretenkort-Kleinschmidt-Sturmfels1990} conjectured that no neighborly fanlike sphere exist.
Remarkably, this paper resolves both issues, including providing a counterexample to the conjecture of  Gretenkort, Kleinschmidt, and Sturmfels.
The fact that their conjecture stood unresolved over three decades despite being incorrect underscores the inherent difficulty in classifying toric manifolds.

A significant breakthrough toward classification was achieved by Choi and Park \cite{Choi-Park2016}.
While the \emph{wedge operation} was already known, Choi and Park were the first to systematically apply this operation to classify toric manifolds.
They showed that if all toric manifolds supported by a given PL~sphere~$K$ are known, then applying a wedge operation at a vertex of $K$ allows one to determine all toric manifolds supported by the resulting complex, which has one dimension higher while preserving the Picard number.
This approach provides a highly practical and concrete construction.
By iteratively applying this method, the classification of toric manifolds supported by~$K$ extends to include all toric manifolds supported by the PL~spheres obtained through successive wedge operations on~$K$.
These resulting PL~spheres form what is known as the \emph{$J$-construction}, introduced by Bahri, Bendersky, Cohen, and Gitler \cite{BBCG2015}, and are denoted by~$K(J)$, where $J$ is a tuple of positive integers specifying the number and positions of the wedge operations.
If a PL~sphere cannot be obtained through wedge operations, we call it a \emph{seed}.
Notably, any PL~sphere can be expressed in the form $K(J)$ with $K$ being a seed.
Thus, the classification of toric manifolds with Picard number~$p$ essentially reduces to identifying all fanlike seeds with Picard number~$p$ and determining the toric manifolds supported by them.

In their subsequent work \cite{CP_wedge_2}, Choi and Park proved that for a fixed Picard number, there exist only finitely many fanlike seeds.
For example, the only fanlike seeds with Picard number~$3$ are the boundaries of a pentagon and a $3$-crosspolytope, while the only seed with Picard number~$2$ is the boundary of a square.
These results provide immediate reproofs of Kleinschmidt and Batyrev's classification results; see, for instance, \cite[Theorem~6.8]{Choi-Park2016}.
A major advantage of this approach is that it does not rely on any geometric properties of toric manifolds, whereas Batyrev's method applies only to projective toric manifolds.
Since non-projective toric manifolds exist for Picard number at least ~$4$, Batyrev's method is not applicable in this case.
However, the approach based on Choi and Park's work remains viable.
Indeed, this paper successfully applies it to classify all toric manifolds with Picard number~$4$.

The authors of this paper previously classified all potential candidates for fanlike seeds with Picard number~$4$ in \cite{Choi-Jang-Vallee2024}.
They identified all seeds with Picard number~$4$ that support a non-singular characteristic map, a necessary condition for being fanlike, and justified that there are exactly $3153$ such seeds, and included them in a publicly accessible database.

In this paper, we provide a complete characterization of the fanlike seeds from this list, presented in Theorem~\ref{thm:characterization_fanlike}.
Using this characterization, it is established that there exist exactly $59$ fanlike seeds; see Section~\ref{section: fans list}.
Furthermore, all toric manifolds supported on these fanlike seeds are identified.
As a direct consequence of these results and Choi-Park's framework, these lists complete the full classification of toric manifolds with Picard number~$4$.

As highlighted earlier, our results provide explicit counterexamples to previously unresolved conjectures posed by Batyrev and Gretenkort, Kleinschmidt, and Sturmfels, offering negative answers to their questions in the final section of this paper; see Propositions~\ref{prop:answer_Batyrev} and~\ref{prop:ans_GKS}.
Such explicit classification results not only resolves a long-standing open problem but also highlights the effectiveness of our classification framework in addressing foundational questions in toric geometry.

\section{Classification of toric manifolds through wedge operations}
Let $K$ be an $(n-1)$-dimensional PL~sphere on $[m]=\{1, 2, \dots, m\}$.
A \emph{(non-singular) characteristic map} $\lambda$ over $K$ is a map from $[m]$ to $\Z^n$ such that for each facet~$\sigma$ of $K$, $\lambda(\sigma)$ forms a basis of $\Z^n$.
The pair~$(K, \lambda)$ is called a \emph{characteristic pair}.
This pair is referred to as \emph{fan-giving} if there exists a complete non-singular fan such that its underlying complex is $K$, and each $1$-cone is generated by the vector~$\lambda(v)$ for some vertex~$v$ of~$K$.
In this case, $K$ is called \emph{fanlike}, and $\lambda$ is also said to be \emph{fan-giving}.

We often regard $\lambda$ as the matrix
$\begin{bmatrix}
    \lambda(1) & \lambda(2) & \cdots & \lambda(m)
\end{bmatrix},$
and $\lambda_\sigma$ denotes the submatrix of~$\lambda$ with columns~$\{\lambda(v) \mid v \in \sigma\}$ for each face~$\sigma$ of~$K$.
For consistency, we use $\lambda_v$ instead of~$\lambda(v)$ for each vertex~$v$.
Two characteristic maps over $K$ are called \emph{Davis-Januszkiewicz equivalent}, or simply \emph{D-J equivalent}, if one can be obtained from the other by left multiplication with an invertible integer matrix.
See \cite{Davis-Januszkiewicz1991} for details.
Two fan-giving characteristic maps over $K$ are \emph{isomorphic} if one is D-J equivalent to the other after right multiplication with the permutation matrix induced by an automorphism of $K$.

It is known that if $K$ is fanlike, then $K$ must be starshaped, thus it is a PL~sphere~\cite{Buchstaber-Panov2015}.
However, not every PL~sphere is fanlike.
For example, the boundary of a $4$-dimensional cyclic polytope with seven vertices is not the underlying complex of any complete non-singular fan \cite{Gretenkort-Kleinschmidt-Sturmfels1990}.
Furthermore, even if $K$ is fanlike, not every characteristic map over $K$ is fan-giving.

The \emph{link} of $\sigma$ in $K$ is the simplicial complex: $$\lk_K(\sigma) = \{\tau \in K \mid \tau \cup \sigma \in K, \, \tau \cap \sigma = \varnothing  \}.$$
Let $\lambda$ be a characteristic map over $K$.
The \emph{projected characteristic map}, or simply \emph{projection}, of~$\lambda$ with respect to~$\sigma$ is defined as the composition of two maps:
$$
    \proj_\sigma(\lambda) \colon [m] \setminus \sigma \xrightarrow{\lambda \vert_{[m] \setminus \sigma}} \Z^n \longrightarrow \Z^n /\left< \lambda(w) \mid w \in \sigma \right>  \cong \Z^{n-k},
$$
where the latter map is the canonical projection.
\begin{proposition}[\cite{Hattori-Masuda2003}, \cite{Choi-Park2016}] \label{proposition: link fan-giving}
  Let $(K, \lambda)$ be a fan-giving characteristic pair.
  Then, $(\lk_K(\sigma), \proj_\sigma(\lambda))$ is a fan-giving characteristic pair for any face $\sigma$ of $K$.
\end{proposition}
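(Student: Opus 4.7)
The plan is to produce the desired fan as the \emph{star fan} of the cone $C_\sigma := \text{cone}(\lambda_v : v\in\sigma)$ inside the complete non-singular fan $\Sigma$ supplied by the hypothesis. Write $k = |\sigma|$, $W := \text{span}_\R(C_\sigma)$, and consider
\[
  \Sigma/W \;:=\; \{\,(C_\eta + W)/W : \sigma \subseteq \eta \in K\,\} \;\subset\; \R^n/W.
\]
I would show that $\Sigma/W$ is a complete non-singular fan in $\R^n/W \cong \R^{n-k}$ whose underlying complex is $\lk_K(\sigma)$ and whose ray generators are the primitive vectors $\proj_\sigma(\lambda)(w)$ for $w \in \lk_K(\sigma)$. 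This exhibits the desired fan-giving structure on $(\lk_K(\sigma), \proj_\sigma(\lambda))$.

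For the combinatorics and non-singularity, the faces $\eta \in K$ with $\sigma \subseteq \eta$ are exactly $\eta = \sigma \sqcup \tau$ for $\tau \in \lk_K(\sigma)$, so the underlying complex of $\Sigma/W$ is $\lk_K(\sigma)$; the cone $(C_\eta + W)/W$ is generated by $\{\overline{\lambda_w} : w \in \tau\}$, where $\overline{\lambda_w}$ is the class of $\lambda_w$ in $\Z^n/\langle \lambda_v : v\in\sigma\rangle$, i.e.\ exactly $\proj_\sigma(\lambda)(w)$. For any facet $\tau$ of $\lk_K(\sigma)$, the set $\sigma \sqcup \tau$ is a facet of $K$, so $\{\lambda_v : v \in \sigma\sqcup\tau\}$ is a $\Z$-basis of $\Z^n$. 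Consequently $\langle \lambda_v : v\in\sigma\rangle$ is a direct summand of $\Z^n$ (so the quotient lattice is free of rank $n-k$), and $\{\proj_\sigma(\lambda)(w) : w \in \tau\}$ is a $\Z$-basis of that quotient. This shows simultaneously that $\proj_\sigma(\lambda)$ is a non-singular characteristic map on $\lk_K(\sigma)$ and that each projected ray vector is primitive.

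The main obstacle is verifying that $\Sigma/W$ really is a complete fan. That the quotient cones intersect along faces and are closed under taking faces is essentially bookkeeping: because $\Sigma$ is simplicial and indexed by $K$, one has $C_\eta \cap C_{\eta'} = C_{\eta\cap\eta'}$, and if $\sigma \subseteq \eta, \eta'$ then $\sigma \subseteq \eta \cap \eta'$, so taking the quotient by $W$ preserves incidences. Completeness is the delicate part. Given $\bar x \in \R^n/W$, lift to $x \in \R^n$ and pick a point $y$ in the relative interior of $C_\sigma$. Because $\Sigma$ is complete and each cone is closed, a sufficiently small neighbourhood of $y$ meets only cones of $\Sigma$ containing $C_\sigma$ (cones that do not contain $\sigma$ are bounded away from the relative interior of $C_\sigma$). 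Hence $y + tx \in C_\eta$ for some $\eta \supseteq \sigma$ and small $t > 0$; projecting modulo $W$ gives $t \bar x \in (C_\eta + W)/W$, and conical invariance then places $\bar x$ itself in that cone. Formalising this step—equivalently, the standard fact that stars of cones in complete fans descend to complete fans on the quotient—is what I expect to absorb most of the care in the write-up, while all remaining claims unfold directly from the definitions.
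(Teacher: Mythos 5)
Your argument is correct, and it is the standard proof of this fact: the paper itself gives no proof, citing Hattori--Masuda and Choi--Park instead, and what those references (and any textbook treatment of star/quotient fans, e.g.\ the orbit--cone correspondence) supply is exactly your construction of $\Sigma/W$ as the star fan of $C_\sigma$ modulo its span. Your treatment of non-singularity (a facet of $\lk_K(\sigma)$ extends $\sigma$ to a facet of $K$, so the $\lambda_v$, $v\in\sigma$, span a direct summand and the projected generators form a basis of the free quotient) and of completeness (a point in the relative interior of $C_\sigma$ has a neighbourhood meeting only cones containing $\sigma$, then push forward and use conicity) are both right. The one place where you are a little too quick is the assertion that ``taking the quotient by $W$ preserves incidences'': the image of $C_{\eta\cap\eta'}$ is automatically contained in $(C_\eta+W)/W\cap(C_{\eta'}+W)/W$, but the reverse inclusion is not pure bookkeeping. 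You need the standard trick: if $x_1\in C_\eta$, $x_2\in C_{\eta'}$ and $x_1-x_2=w_+-w_-$ with $w_\pm\in C_\sigma$, then $x_1+w_-=x_2+w_+$ lies in $C_\eta\cap C_{\eta'}=C_{\eta\cap\eta'}$ because $C_\sigma$ is a face of both cones, and this element has the same class modulo $W$. With that sentence added (and the routine check that each quotient cone is strongly convex, which follows from unimodularity), the write-up would be complete.
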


The \emph{wedge} of $K$ at a vertex $v$ is defined as the simplicial complex
$$
    \wed_v(K) = \{\sigma \cup \tau \mid (\sigma \in I, \, \tau \in \lk_K(v)) \text{ or } (\sigma \in \partial I, \, v \not \in \tau \in K)\},
$$
where $I$ is the $1$-simplex with vertex set $\{v, v'\}$, and $\partial I$ is its boundary complex.
In other words, $\wed_v(K)$ is obtained from $K$ by replacing the vertex $v$ with two distinct vertices $v$ and $v'$, connecting each to the faces not containing $v$, and ``wedging'' along the link of~$v$.

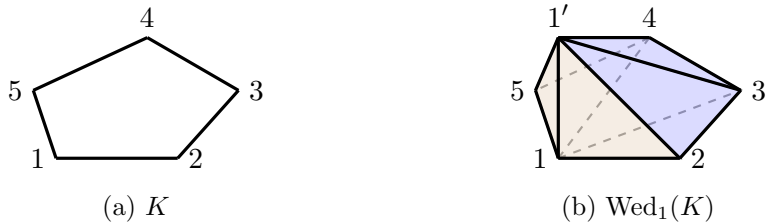
\begin{figure}[h]
    \centering
        \begin{subfigure}[t]{0.4\textwidth}
        \centering
        \begin{tikzpicture}[thick,scale=2]
            \coordinate[label=left:1] (1) at (0, 0);
            \coordinate[label=right:2] (2) at (0.8, 0);
            \coordinate[label=right:3] (3) at (1.2, 0.45);
            \coordinate[label=above:4] (4) at (0.6, 0.8);
            \coordinate[label=left:5] (5) at (-0.15, 0.45);

            \draw[very thick] (1) -- (2);
            \draw[very thick] (2) -- (3);
            \draw[very thick] (3) -- (4);
            \draw[very thick] (4) -- (5);
            \draw[very thick] (1) -- (5);
        \end{tikzpicture}
        \subcaption{$K$}
        \end{subfigure}
        \begin{subfigure}[t]{0.4\textwidth}
        \centering
        \begin{tikzpicture}[thick,scale=2]
            \coordinate[label=left:$1$] (1) at (0, 0);
            \coordinate[label=right:2] (2) at (0.8, 0);
            \coordinate[label=right:3] (3) at (1.2, 0.45);
            \coordinate[label=above:4] (4) at (0.6, 0.8);
            \coordinate[label=left:5] (5) at (-0.15, 0.45);
            \coordinate[label=above:$1'$] (6) at (0, 0.8);

            \draw[dashed] (1) -- (3);
            \draw[dashed] (1) -- (4);
            \draw[dashed] (4) -- (5);

            \draw[fill=brown!20,opacity=0.7] (1) -- (2) -- (6);
            \draw[fill=brown!20,opacity=0.7] (1) -- (5) -- (6);
            \draw[fill=blue!20,opacity=0.7] (2) -- (3) -- (6);
            \draw[fill=blue!20,opacity=0.7] (3) -- (4) -- (6);

            \draw[very thick] (1) -- (2);
            \draw[very thick] (2) -- (3);
            \draw[very thick] (3) -- (4);

            \draw[very thick] (1) -- (5);
            \draw[very thick] (1) -- (6);
            \draw[very thick] (2) -- (6);
            \draw[very thick] (3) -- (6);
            \draw[very thick] (4) -- (6);
            \draw[very thick] (5) -- (6);
        \end{tikzpicture}
        \subcaption{$\wed_{1}(K)$}
        \end{subfigure}
        \caption{Wedge of the boundary of a pentagon.}
\end{figure}

One can regard $K$ as a subcomplex of $\wed_v(K)$.
In fact, we have natural isomorphisms
$$
    K = \lk_{\wed_v(K)}(v') \cong \lk_{\wed_v(K)}(v).
$$
Moreover, it is known that $K$ is a PL~sphere if and only if $\wed_v(K)$ is a PL~sphere for any vertex~$v$ of $K$ \cite{Choi-Jang-Vallee2024}.
These facts suggests that taking the link at either of the distinguished vertices~$v$ or~$v'$ can be viewed, in some sense, as the inverse operation to the wedge operation at the vertex~$v$, within the collection of isomorphism classes of PL~spheres.
This relationship is also reflected in the interaction between the collection of D-J equivalent classes over $K$ and those of~$\wed_v(K)$.

Up to simplicial isomorphism, we may assume that $\{1, 2, \ldots, n\}$ is a facet of~$K$.
Thus, each D-J equivalence class admits a unique representative whose first $n$ columns form the $n \times n$ identity matrix~$I_n$.
Let
\begin{align*}
    \lambda_1 = \begin{bmatrix}
                    1 & \mathbf{0} & \textbf{a} \\
                    \mathbf{0} & I_{n-1} & A
                    \end{bmatrix} \text{ and }
    \lambda_2 = \begin{bmatrix}
                    1 & \mathbf{0} & \textbf{b} \\
                    \mathbf{0} & I_{n-1} & A
                \end{bmatrix}
\end{align*}
be characteristic maps over~$K$, where $\mathbf{0}$ denotes zero vectors of appropriate size and position, and~$A$ is an $(n-1) \times (m-n)$ matrix.
Then one can directly check that
\begin{equation} \label{eq: wedged map}
    \lambda =   \begin{bmatrix}
                    1 & 0 & \mathbf{0} & \textbf{a} \\
                    0 & 1 & \mathbf{0} & \textbf{b} \\
                    \mathbf{0} & \mathbf{0} & I_{n-1} & A
                \end{bmatrix}
\end{equation}
is a characteristic map over $\wed_{1}(K)$, where the first two columns correspond to the vertices~$1$ and~$1'$.
Note that the projections~$\proj_{1}(\lambda)$ and~$\proj_{1'}(\lambda)$ recover $\lambda_2$ and~$\lambda_1$, respectively.
Conversely, given a characteristic map~$\lambda$ over $\wed_{1}(K)$, the presence of the edge~$\{1, 1'\}$ implies the existence of a facet containing this edge.
Thus, we may assume that $\lambda$ has the form~\eqref{eq: wedged map}, where the first two columns correspond to vertices~$1$ and~$1'$.

This observation shows that any characteristic map over $\wed_v(K)$ can be reconstructed from two characteristic maps defined over $K$.
Moreover, the following holds.
\begin{theorem} [\cite{Choi-Park2016}] \label{theorem: wedge fan-giving}
  Let $K$ be a PL~sphere, and $\lambda$ a characteristic map over $\wed_v(K)$.
  Then, for any vertex $v$ of $K$, $(\wed_v(K), \lambda)$ is fan-giving if and only if $(K, \proj_{v}(\lambda))$ and $(K, \proj_{v'}(\lambda))$ are fan-giving.
\end{theorem}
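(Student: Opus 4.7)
The forward direction is immediate from Proposition~\ref{proposition: link fan-giving}: since $\lk_{\wed_v(K)}(v') = K$ and $\lk_{\wed_v(K)}(v) \cong K$ via $v' \leftrightarrow v$, with $\proj_{v'}(\lambda), \proj_v(\lambda)$ coinciding under these identifications with $\lambda_1, \lambda_2$ of~\eqref{eq: wedged map}, applying the proposition at $\sigma = \{v\}$ and $\{v'\}$ yields fan-givingness of both projected pairs.

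For the backward direction, let $\Sigma_i$ realize $(K, \lambda_i)$ for $i = 1, 2$. Consider the candidate $\Sigma = \{\text{cone}(\lambda(F)) \mid F \in \wed_v(K)\}$ in $\R^{n+1}$; non-singularity of each maximal cone is immediate from $\lambda$ being a characteristic map. The plan is to verify the \emph{wall condition} at every codimension-one face $F$ of $\wed_v(K)$: the two generators outside $F$ of the two facets above $F$ must lie on opposite sides of $\text{span}(\lambda(F))$. I split into cases by $F \cap \{v, v'\}$.

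If $\{v, v'\} \subseteq F$, the wall condition at $F$ reduces, via projection modulo $\text{span}(e_1, e_2)$, to a wall in $(\lk_K(v), \proj_v(\lambda_1))$, which holds by Proposition~\ref{proposition: link fan-giving} applied to $\Sigma_1$. If $F$ contains exactly one of $v, v'$, projection modulo the corresponding ray reduces the wall condition to one in $(K, \lambda_2)$ or $(K, \lambda_1)$, which holds by fan-givingness of $\Sigma_2$ or $\Sigma_1$.

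The critical case is $F \cap \{v, v'\} = \varnothing$: then $F$ is a facet of $K$ and the two facets of $\wed_v(K)$ above $F$ are $F \cup \{v\}$ and $F \cup \{v'\}$. Laplace expansion along the standard-basis column yields $\det[e_1 \mid \lambda(F)] = \det(\lambda_2(F))$ and $\det[e_2 \mid \lambda(F)] = -\det(\lambda_1(F))$, so the wall coefficient $\alpha$ defined by $e_1 + \alpha e_2 \in \text{span}(\lambda(F))$ equals $\det(\lambda_2(F))/\det(\lambda_1(F))$. The main obstacle is showing this ratio equals $+1$, which I handle by induction on the (connected) dual graph of $K$: at $\sigma_0 = \{1, \ldots, n\}$ both determinants are $+1$, and since both $\Sigma_1, \Sigma_2$ are non-singular, their wall relations take the universal form $\lambda(w) + \lambda(w') \in \text{span}(\lambda(\tau))$, so $\det$ flips uniformly by $-1$ across each adjacency, preserving the equality $\det(\lambda_1(\sigma)) = \det(\lambda_2(\sigma))$ throughout $K$. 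Hence $\alpha = +1$. A standard fact---that wall conditions on a simplicial sphere induce a complete fan---completes the proof.
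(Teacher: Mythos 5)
First, note that the paper does not prove this theorem at all: it is quoted from Choi--Park \cite{Choi-Park2016}, so there is no internal proof to compare against. Judged on its own terms, your forward direction is correct and is exactly the intended application of Proposition~\ref{proposition: link fan-giving}. In the backward direction, your reduction of every wall of $\wed_v(K)$ to data over $K$ or $\lk_K(v)$ is a clean and correct case analysis, and the determinant computation in the critical case ($F\cap\{v,v'\}=\varnothing$) is right; in fact the identity $\det((\lambda_1)_F)=\det((\lambda_2)_F)$ that you establish by propagating signs along the dual graph is precisely Proposition~\ref{proposition: fan-giving positive} applied to the two fan-giving maps $\lambda_1,\lambda_2$ with the common normalization $\det(I_n)=1$, so that step could simply be cited rather than reproved.

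The genuine gap is the last sentence. The ``standard fact'' that wall conditions on a simplicial sphere yield a complete fan is false as stated: wall conditions (even together with positivity) are purely local, and they characterize complete non-singular \emph{multi-fans} with all local degrees equal to $+1$, not fans. A concrete obstruction: take the suspension of a $2k$-cycle in $\R^3$ with apexes $\pm e_3$ and equatorial generators winding twice around the origin with consecutive unimodular pairs; every wall condition and every orientation condition is satisfied, yet the cones overlap and the collection is not a fan. The missing global ingredient is that the degree is $1$, i.e., that some (equivalently, every) generic point of $\R^{n+1}$ lies in exactly one cone, and this is exactly where the hypothesis that $\Sigma_1$ and $\Sigma_2$ are honest fans (not merely sources of local wall relations) must be used --- for instance by analyzing the stars of $v$ and $v'$ as ``graphs'' over the complete fans $\Sigma_2$ and $\Sigma_1$ and showing their union covers $\R^{n+1}$ exactly once, or by verifying the separation criterion of Lemma~\ref{lemma: fan-givingness} for pairs of facets of $\wed_v(K)$ directly from the corresponding criterion for $K$. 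For $n\geq 2$ one could instead try to upgrade the wall conditions to a covering-space argument over $S^n$, but the wall conditions alone do not even give local injectivity near codimension-two faces, so that route also needs the degree computation. As written, the proof is therefore incomplete at its crux, although everything up to that point is sound and the gap is repairable.
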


A PL~sphere is called a \emph{seed} if it cannot be expressed as the wedge of another PL~sphere.
By Theorem~\ref{theorem: wedge fan-giving}, every fanlike sphere~$K$ is obtained through a sequence of wedge operations starting from a seed~$L$.
Furthermore, every fan-giving characteristic map over $K$ is constructed from fan-giving characteristic maps over $L$.
Note that the wedge operation increases both the number of vertices $m$ and the dimension~$n-1$ by exactly one, thus preserving the quantity~$m-n$.
This invariant quantity $m-n$ is called the \emph{Picard number} of the PL~sphere~$K$.
Indeed, it coincides precisely with the Picard number of the toric manifolds supported by~$K$.

\begin{theorem} \cite{Choi-Jang-Vallee2024} \label{theorem: classification}
    Up to simplicial isomorphism, the numbers of seeds of dimension~$n-1$ with Picard number~$p\leq4$ admitting a characteristic map are:
    \begin{center}
    	\begin{tabular}{l*{12}{c}r}
    		\toprule
    		& \multicolumn{12}{c}{n}\tabularnewline  \cmidrule(lr){2-13}
    		$p$ &$1$ & $2$ & $3$ & $4$ & $5$ & $6$ & $7$ & $8$ & $9$ & $10$ & $11$ & $>11$& total \tabularnewline \midrule
    		$1$&$1$&  &&&&&&&&&&&$1$\tabularnewline
    		$2$ && $1$ & & &&&&&&&&& $1$\tabularnewline
    		$3$ && $1$ & $1$& $1$& & & & &&&& &$3$\tabularnewline
    		$4$ && $1$ & $4$ & $21$ & $142$ & $733$ & $1190$ & $776$ & $243$& $39$ & $4$ & &$3153$\tabularnewline
    		\bottomrule
    	\end{tabular}
    \end{center}
   where empty entries represent zero.
\end{theorem}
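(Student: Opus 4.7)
The plan is to reduce the statement to a finite combinatorial enumeration, exploiting the result of Choi and Park (cited in the introduction) that for each fixed Picard number $p$ the set of isomorphism classes of seeds admitting a characteristic map is finite. The rows $p \leq 3$ recover classical facts: for $p = 3$, for instance, the three seeds are the boundary of the pentagon ($n=2$), the boundary of the $3$-crosspolytope ($n=3$), and the boundary of the $4$-dimensional cyclic polytope on seven vertices ($n=4$), the last of which admits a characteristic map but is not fanlike, as noted in the introduction.

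For $p = 4$, which is the content of \cite{Choi-Jang-Vallee2024}, the argument proceeds in three stages. First, one establishes an explicit bound $n \leq 11$ on the dimension of a seed with Picard number $4$; this comes from analyzing when the $\wed$ operation can fail to produce new seeds, which in turn constrains the number of rays relative to the dimension. Second, for each $n \in \{2, \ldots, 11\}$, I would enumerate all PL~spheres of dimension $n-1$ on $n+4$ vertices up to simplicial isomorphism, using standard sphere-generation algorithms (for instance, incremental construction by facets or enumeration via minimal non-faces), and then filter out those with a vertex $v$ such that $K$ decomposes combinatorially as $\wed_v(L)$ for some smaller PL~sphere $L$, keeping only the seeds. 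Third, for each remaining seed $K$ on $[m]$, one tests the existence of a characteristic map $\lambda \colon [m] \to \Z^n$ with $\det \lambda_\sigma = \pm 1$ on every facet $\sigma$; by Davis-Januszkiewicz equivalence one may fix a distinguished facet to the identity matrix, reducing the test to a bounded backtracking search over integer vectors subject to a system of unimodularity conditions.

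The main obstacle is the sheer scale of the enumeration, which is concentrated around $n = 7$ and $n = 8$, where a very large number of spheres must be generated and over a thousand seeds survive the filtering. Practical tractability requires aggressive use of the automorphism group of each sphere to cut down the characteristic-map search, early pruning via necessary combinatorial conditions (such as constraints on minimal non-faces), and orbit-based backtracking for the unimodularity test. The theorem is thus established as a certified computer-assisted classification, with the resulting list of $3153$ seeds for $p=4$ being made publicly accessible in the database described in the introduction.
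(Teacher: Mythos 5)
This theorem is imported verbatim from \cite{Choi-Jang-Vallee2024}; the present paper offers no proof, so the only meaningful comparison is with the method of that cited work. Your skeleton --- finiteness via Choi--Park, a finite computer-assisted enumeration, then a characteristic-map existence test organized by D-J equivalence and automorphisms --- has the right shape, and your reading of the $p\leq 3$ rows is accurate. The genuine gap is in your second stage: ``enumerate all PL~spheres of dimension $n-1$ on $n+4$ vertices up to simplicial isomorphism, \ldots\ and then filter.'' For $p=4$ the table reaches $n=11$, i.e.\ $10$-dimensional spheres on $15$ vertices, and the number of combinatorial types of PL~spheres grows super-exponentially in this range (already for $3$-spheres on $10$ vertices the count is in the hundreds of thousands); an unconstrained sphere enumeration followed by filtering is not a computation that terminates in practice, no matter how aggressively one prunes afterwards. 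The feasibility of the classification rests on two structural facts that your proposal does not use. First, the explicit bound is $m\leq 2^p-1$ (hence $n\leq 2^p-p-1=11$ for $p=4$): for a seed admitting a characteristic map, the columns of the mod-$2$ Gale dual of the characteristic matrix must be pairwise distinct and nonzero in $\Z_2^p$, since two equal columns would exhibit $K$ as a wedge. This is sharper and more specific than your appeal to ``analyzing when the wedge operation can fail to produce new seeds.'' Second, and decisively, the existence of a characteristic map is hereditary to vertex links (the projection of a characteristic map to the link of a face is again a characteristic map), so the enumeration is performed inductively on $n$: one only ever generates $(n-1)$-spheres all of whose vertex links already occur, up to wedge, in the previously computed list one dimension lower. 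It is this restriction to link-admissible complexes, combined with the $\Z_2^4$-labelling of vertices forced by the Gale dual, that collapses the search space to the $3153$ seeds in the table.

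A smaller but nontrivial point: the cited classification is phrased in terms of maximal Buchstaber number, i.e.\ mod-$2$ characteristic maps, whereas the theorem as stated here speaks of integral characteristic maps as defined in Section~2. Your proposal tests integral unimodularity directly by backtracking, which is a legitimate alternative, but if one follows the cited route one must additionally address why the mod-$2$ and integral counts agree for these seeds. As written, your argument neither proves the bound $n\leq 11$ nor can carry out the enumeration, so it does not establish the stated numbers.
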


In the table, the seed with $p=1$, $n=1$ is the simplicial complex consisting of two isolated vertices without edges.
The seed with $p=2$, $n=2$ is the boundary of a quadrilateral.
When $p=3$, the seed for $n=2$, $3$, $4$ are respectively the boundaries of a pentagon~$P^2(5)$, a $3$-crosspolytope, and a $4$-dimensional cyclic polytope~$C^4(7)$ with seven vertices.
Among these, with Picard number at most~$3$, all are fanlike except for the boundary $\partial C^4(7)$ of the $4$-cyclic polytope \cite{Gretenkort-Kleinschmidt-Sturmfels1990}.
Toric manifolds supported by these fanlike seeds are already classified and well-known from the works of Kleinschmidt \cite{Kleinschmidt1988} and Batyrev \cite{Batyrev1991}.

In the remainder of this paper, we provide a complete classification of all fanlike seeds with Picard number~$4$, along with explicit fan-giving characteristic maps over each seed.
As previously discussed in this section, the wedge operation allows us to explicitly construct every toric manifold of Picard number~$4$ from these fanlike seeds.
Therefore, the list provides a complete and explicit classification of all toric manifolds with Picard number~$4$.

\section{Fan-giving characteristic pairs}
Let $K$ be an $(n-1)$-dimensional PL~sphere on $[m]$.
For a $(k-1)$-face~$\sigma$ of $K$, $\{\lambda_v \mid v \in \sigma \}$ generates a strongly convex $k$-cone by the non-singularity condition of~$\lambda$.
We denote the cone by $pos(\sigma)$.
This cone depends on $\lambda$, but we use this notation only in the proof of the following lemma, where $\lambda$ is fixed.

\begin{lemma} \label{lemma: fan-givingness}
    A characteristic pair~$(K, \lambda)$ is fan-giving if and only if for any two facets $\sigma_1$ and $\sigma_2$ of $K$, every nonzero vector $x = [x_v]_{v \in \sigma_1 \cup \sigma_2} \in \ker(\lambda_{\sigma_1 \cup \sigma_2})$
    satisfies at most one of the following;
    \begin{itemize}
        \item $x_v \geq 0$ for all $v \in \sigma_1 \setminus \sigma_2$.
        \item $x_v \leq 0$ for all $v \in \sigma_2 \setminus \sigma_1$.
    \end{itemize}
\end{lemma}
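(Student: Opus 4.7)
The plan is to recognize that $(K, \lambda)$ being fan-giving amounts to showing that the family of cones $C_\sigma := \mathrm{pos}(\sigma)$ for $\sigma \in K$ assembles into a non-singular fan with support $\R^n$. Each $C_\sigma$ is already simplicial by non-singularity of $\lambda$, and once the fan structure is established completeness is automatic because $K$ is a PL sphere of dimension $n-1$: the resulting union of cones is a connected $n$-manifold in $\R^n$ without boundary, hence equals $\R^n$. The real content to establish is the pairwise intersection property $C_{\sigma_1} \cap C_{\sigma_2} = C_{\sigma_1 \cap \sigma_2}$ for every pair of facets of $K$, and the task is to match this geometric condition with the stated algebraic condition on $\ker(\lambda_{\sigma_1 \cup \sigma_2})$.

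For the direction $(\Leftarrow)$, I would start from any $y \in C_{\sigma_1} \cap C_{\sigma_2}$ and write it as $y = \sum_{v \in \sigma_1} a_v \lambda_v = \sum_{v \in \sigma_2} b_v \lambda_v$ with $a_v, b_v \geq 0$. The vector whose entries are $a_v$ on $\sigma_1 \setminus \sigma_2$, $-b_v$ on $\sigma_2 \setminus \sigma_1$, and $a_v - b_v$ on $\sigma_1 \cap \sigma_2$ lies in $\ker(\lambda_{\sigma_1 \cup \sigma_2})$ and automatically satisfies both sign conditions. The hypothesis forces this vector to vanish, which gives $a_v = b_v = 0$ outside $\sigma_1 \cap \sigma_2$ and places $y$ in $C_{\sigma_1 \cap \sigma_2}$, as required.

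For $(\Rightarrow)$ I argue by contradiction: assuming a nonzero kernel vector $x$ satisfies both sign conditions, I would produce an explicit point $y \in C_{\sigma_1} \cap C_{\sigma_2}$ whose membership in $C_{\sigma_1 \cap \sigma_2}$ (granted by fan-givingness) collapses $x$. The natural choice is $y := \sum_{v \in \sigma_1 \setminus \sigma_2} x_v \lambda_v + \sum_{v \in \sigma_1 \cap \sigma_2} \max(x_v, 0)\, \lambda_v$: splitting the entries on the common face into positive and negative parts makes it manifest that $y \in C_{\sigma_1}$, and substituting the kernel relation rewrites $y$ as a combination over $\sigma_2$ whose coefficients are non-negative exactly because of the second sign condition, so $y \in C_{\sigma_2}$ as well. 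Comparing the resulting expression of $y$ in the basis $\lambda_{\sigma_1}$ with its expression as a combination over $\sigma_1 \cap \sigma_2$ forces $x_v = 0$ on $\sigma_1 \setminus \sigma_2$; the symmetric construction starting from $\sigma_2$ kills the entries on $\sigma_2 \setminus \sigma_1$, and linear independence of $\{\lambda_v : v \in \sigma_1 \cap \sigma_2\}$ finishes the contradiction.

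The main obstacle is the sign bookkeeping in $(\Rightarrow)$: one needs a single vector $y$ that simultaneously witnesses membership in both cones, and the trick of using $\max(x_v,0)$ on the common face — equivalently, separating the positive and negative parts of $x$ along $\sigma_1 \cap \sigma_2$ — is the mechanism that lets both sign assumptions be cashed in at once. A smaller subtlety is the silent appeal to completeness in the reverse direction, which must be justified by the sphere topology of $K$ rather than by the kernel condition itself.
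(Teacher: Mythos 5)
Your argument follows the same route as the paper's proof: completeness comes for free from the sphere topology of $K$, and both implications are handled by the same correspondence between points of $C_{\sigma_1}\cap C_{\sigma_2}$ and sign-constrained vectors in $\ker(\lambda_{\sigma_1\cup\sigma_2})$. Your $\max(x_v,0)$ device on the common face is precisely the paper's ``move the negative terms to the other side of the kernel relation,'' and both directions are correct as far as they go.

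There is, however, one step you skip that the paper treats explicitly in the final paragraph of its proof. The fan axiom demands $C_{\tau_1}\cap C_{\tau_2}=C_{\tau_1\cap\tau_2}$ for \emph{every} pair of faces $\tau_1,\tau_2$ of $K$, whereas the lemma's kernel condition is quantified only over pairs of \emph{facets}. Your $(\Leftarrow)$ direction therefore only establishes the intersection property for the maximal cones and then silently concludes fan-givingness; the sentence asserting that the facet case is ``the real content'' presupposes the reduction rather than proving it. The reduction is true but requires an argument. One way: for faces $\tau_i\subseteq\sigma_i$ with $\sigma_i$ facets, $C_{\tau_1}\cap C_{\tau_2}\subseteq C_{\sigma_1}\cap C_{\sigma_2}=C_{\sigma_1\cap\sigma_2}$; since $C_{\tau_1}$ and $C_{\sigma_1\cap\sigma_2}$ are faces of the simplicial cone $C_{\sigma_1}$, whose generators are linearly independent, they meet in $C_{\tau_1\cap\sigma_2}$, and symmetrically for $\tau_2$; intersecting the two resulting faces inside the simplicial cone $C_{\sigma_1\cap\sigma_2}$ gives $C_{\tau_1}\cap C_{\tau_2}\subseteq C_{\tau_1\cap\tau_2}$. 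The paper instead reruns the kernel argument: it takes a putative point of $(C_{\tau_1}\cap C_{\tau_2})\setminus C_{\tau_1\cap\tau_2}$, extends its coefficient vector by zeros to a vector in $\ker(\lambda_{\sigma_1\cup\sigma_2})$ for suitable facets $\sigma_i\supseteq\tau_i$, checks that the extended vector still satisfies both sign conditions, and derives a contradiction from the facet-level hypothesis. Either fix is short, but without one of them the backward implication is incomplete.
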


\begin{proof}
  When the set $\{pos(\lambda_\tau) \mid \tau \in K\}$ forms a fan, its underlying simplicial complex is a simplicial sphere $K$, so it is complete.
  By the definition of a fan, the pair is fan-giving if and only if for any two distinct faces $\tau_1$ and $\tau_2$ of $K$, two cones $pos(\tau_1)$ and $pos(\tau_2)$ intersect only at their faces, that is, $pos(\tau_1 \cap \tau_2) = pos(\tau_1) \cap pos(\tau_2)$.

  Note that by the definition of a cone, $$pos(\tau_1 \cap \tau_2) \subset pos(\tau_1) \cap pos(\tau_2).$$

  A point $z \in pos(\tau_1) \cap pos(\tau_2)$ is expressed in two ways:
  \begin{equation} \label{eqn:z_expression}
    z = \sum_{v \in \tau_1} \lambda_v c_v = \sum_{v \in \tau_2} \lambda_v d_v
  \end{equation}
  for some nonnegative coefficients $c_v$ and $d_v$.
  Note that since the columns of $\lambda$ corresponding to a face of $K$ are a part of a basis of $\Z^n$ by the non-singularity condition of $\lambda$, each $c_v$ and $d_v$ are uniquely determined.
  Rearranging the equation, we obtain
  $$
    \sum_{v \in \tau_1 \cap \tau_2} \lambda_v (c_v-d_v) +  \sum_{v \in \tau_1 \setminus \tau_2} \lambda_v c_v + \sum_{v \in \tau_2 \setminus \tau_1} (- d_v) \lambda_u =0.
  $$
  Then, the coefficients $c_v-d_v$, $c_v$, $-d_v$ in the above equation can be regarded as a component of a vector $x \in \ker(\lambda_{\tau_1 \cup \tau_2})$
  satisfying
  \begin{equation}\label{eqn:condition}
   x_v \geq 0  \text{ for all } v \in \tau_1 \setminus \tau_2, \text{ and } x_v \leq 0  \text{ for all } v \in \tau_2 \setminus \tau_1.
  \end{equation}
  By the uniqueness of the expression of $z$ in each of the cones $pos(\tau_1)$, $pos(\tau_2)$, and $pos(\tau_1 \cap \tau_2)$, the following are equivalent:
  \begin{itemize}
    \item $z \in pos(\tau_1 \cap \tau_2)$,
    \item $x_v = 0$ for all $v \in \tau_1 \setminus \tau_2$,
    \item $x_v = 0$ for all $v \in \tau_2 \setminus \tau_1$,
    \item $x_v = 0$ for all $v \in \tau_1 \cup \tau_2$.
  \end{itemize}
  Hence if there is no nonzero vector $x \in \ker(\lambda_{\tau_1 \cup \tau_2})$ satisfying \eqref{eqn:condition}, then $$pos(\tau_1) \cap pos(\tau_2) \subset pos(\tau_1 \cap \tau_2)$$

  Conversely, assume that $pos(\tau_1 \cap \tau_2) = pos(\tau_1) \cap pos(\tau_2)$ holds.
  Suppose that $x$ is a nonzero element of the kernel satisfying \eqref{eqn:condition}.
  By moving negative terms in $x$ to the other side of the equation, we obtain a point $z \in  pos(\tau_1) \cap pos(\tau_2)$ in the sense of \eqref{eqn:z_expression}.
  Since $z \in pos(\tau_1 \cap \tau_2) = pos(\tau_1) \cap pos(\tau_2)$, by the above equivalence, $x$ is the zero vector, which contracts the assumption, and, hence, any nonzero element of the kernel does not satisfy \eqref{eqn:condition}.

  Finally, we need to show that we can only consider facets.
  Assume that the condition in the statement of the lemma holds.
  Let $\tau_1$ and $\tau_2$ be faces of $K$.
  Suppose that there is a nonzero vector $z \in (pos(\tau_1) \cap pos(\tau_2)) \setminus pos(\tau_1 \cap \tau_2)$.
  Without loss of generality, by the above equivalence, we may assume that $c_{v'}$ is positive for a vertex $v' \in \tau_1 \setminus \tau_2$.
  Let $\sigma_1$ and $\sigma_2$ be facets of~$K$ containing~$\tau_1$ and~$\tau_2$, respectively.
  Since $K$ is a PL~sphere, every $(n-2)$-face is contained in exactly two facets.
  Thus, we can choose $\sigma_2$ not containing $v'$.
  Let $x$ be the vector in $\R^{\sigma_1 \cup \sigma_2}$ such that $$x_v = \begin{cases}
                                            c_v-d_v, & \mbox{if } v \in \tau_1 \cap \tau_2, \\
                                            c_v, & \mbox{if } v \in \tau_1 \setminus \tau_2 \\
                                            -d_v, & \mbox{if } v \in \tau_2 \setminus \tau_1 \\
                                            0, & \mbox{otherwise}.
                                          \end{cases}$$
  Then $x \in \ker(\lambda_{\sigma_1 \cup \sigma_2})$, and $x$ satisfies \eqref{eqn:condition} with $\sigma_1$ and $\sigma_2$ instead of $\tau_1$ and $\tau_2$.
   By the assumption and the above equivalence, $x_v = 0$ for all $v \in \sigma_1 \cup \sigma_2$.
  Therefore, $c_{v'}=0$, which leads to a contradiction.
\end{proof}

\begin{remark}
  The condition in Lemma~\ref{lemma: fan-givingness} can be reduced to comparing signed circuits of the oriented matroid associated to $\lambda$.
  However, we need to compute fan-givingness of not only fixed characteristic maps, but also characteristic maps with some variables, see Section~\ref{section: fans list}.
  Therefore, as the oriented matroid associated to $\lambda$ is not fixed, no simple algorithm exist for solving this problem.
\end{remark}

For the vertex $1$ of $K$, we consider the situation in which all fan-giving characteristic maps over $\lk_K(1)$ are already known.
Up to D-J equivalence, $\lambda$ can be written in the form:
$$
    \begin{bmatrix}
        1 & \mathbf{0} & b_1 & \dots & b_{m-n} \\
        \mathbf{0} & I_{n-1} & \multicolumn{3}{c}{A}
    \end{bmatrix},
$$ where $b_1, \ldots, b_{m-n}$ are some integers, $A$ is an $(n-1) \times (m-n)$ matrix, and $\mathbf{0}$ denotes zero vectors of appropriate sizes at the corresponding positions.
We emphasize that the symbols $A$, $b_1, \ldots, b_{m-n}$ represent unknown integers.
The submatrix of
$\begin{bmatrix}
    I_{n-1} & A
\end{bmatrix}$
consisting of the columns corresponding to the vertices of $\lk_K(1)$ is a characteristic map over $\lk_K(1)$.
According to Proposition~\ref{proposition: link fan-giving}, for~$\lambda$ to be fan-giving, this characteristic map given by the submatrix must also be fan-giving.
Then the information about the fans over $\lk_K(1)$ restricts a large portion of the possible values of unknowns in $A$ if the link contains many vertices.
In particular, if $\lk_K(1)$ has $m-1$ vertices, that is, $\begin{bmatrix}
                                                             I_{n-1} & A
                                                           \end{bmatrix}$ itself is a characteristic map over $\lk_K(1)$, then there remain only $m-n$ unknowns $b_1$, \dots, $b_{m-n}$.

Fix a fan-giving characteristic map over $\lk_K(1)$.
To determine $\lambda$ explicitly, we proceed as follows.
Let $\cF$ be the facet set of $K$.
We assign a natural ordering on the vertex set $[m]$ of $K$, which induces a map $d \colon \cF \to \{-1, 1\} \subset \Z$, defined by
$$
    d(\sigma) = \det(\lambda_\sigma).
$$
On the other hand, once an orientation of $K$ is fixed, the chosen ordering induces another map
$$
    o \colon \cF \to \{-1, 1\} \subset \Z.
$$
If these two maps coincide, i.e., if $d(\sigma) = o(\sigma)$ for all $\sigma \in \cF$, then $(K, \lambda)$ is said to be \emph{positively oriented}, or simply \emph{positive}.
When the simplicial complex $K$ is clear from the context, we call $\lambda$ itself positive.
\begin{proposition}[\cite{Choi-Park2016}] \label{proposition: fan-giving positive}
  If $(K, \lambda)$ is a fan-giving characteristic pair, then it is positive.
\end{proposition}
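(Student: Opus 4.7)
The plan is to show that the function $d/o : \cF \to \{\pm 1\}$ is constant on the (connected) dual graph of $K$, hence $d = \pm o$ globally; in the $-1$ case, reversing the chosen orientation of $K$ flips $o$ uniformly and yields $d = o$, making $(K, \lambda)$ positive.

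First I would analyze the local move between two adjacent facets. Let $\sigma_1, \sigma_2 \in \cF$ share a codimension-one face $\tau = \sigma_1 \cap \sigma_2$, with $\sigma_1 \setminus \sigma_2 = \{v\}$ and $\sigma_2 \setminus \sigma_1 = \{w\}$. Non-singularity of $\lambda$ on $\sigma_1$ yields a unique integer expansion $\lambda_w = c_v \lambda_v + \sum_{u \in \tau} c_u \lambda_u$, and the vector $x$ with $x_w = 1$, $x_v = -c_v$, $x_u = -c_u$ (zero elsewhere) spans $\ker \lambda_{\sigma_1 \cup \sigma_2}$. Applying Lemma~\ref{lemma: fan-givingness} to $-x$: the condition $-x_w = -1 \le 0$ on $\sigma_2 \setminus \sigma_1$ is automatic, so the other condition must fail, giving $-x_v = c_v < 0$.

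Next let $k$ denote the position of $v$ in the natural ordering of $\sigma_1$ and $l$ that of $w$ in $\sigma_2$. Moving the $\lambda_v$- and $\lambda_w$-columns to the last position (respective signs $(-1)^{n-k}$ and $(-1)^{n-l}$) and using $\det[\lambda_\tau \mid \lambda_w] = c_v \det[\lambda_\tau \mid \lambda_v]$, I would compute
\[
d(\sigma_2) \;=\; (-1)^{n-l}\, c_v \, \det[\lambda_\tau \mid \lambda_v] \;=\; (-1)^{k+l}\, c_v \, d(\sigma_1) \;=\; (-1)^{k+l+1}\, d(\sigma_1),
\]
using $c_v < 0$. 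On the orientation side, the chosen orientation of $K$ is an integral $(n-1)$-cycle $\sum_\sigma o(\sigma)[\sigma]$ with each $[\sigma]$ taken in natural order. Since $\tau$ lies in exactly the two facets $\sigma_1, \sigma_2$, vanishing of the $[\tau]$-coefficient in the boundary reads $(-1)^{k-1} o(\sigma_1) + (-1)^{l-1} o(\sigma_2) = 0$, i.e.\ $o(\sigma_2) = (-1)^{k+l+1} o(\sigma_1)$. Hence $d/o$ is invariant under every local move.

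Finally, the dual graph of a PL~sphere is connected, so $d = \varepsilon\, o$ globally for some $\varepsilon \in \{\pm 1\}$; replacing the orientation of $K$ by its opposite when $\varepsilon = -1$ yields $d = o$ everywhere, so $(K, \lambda)$ is positive. The main obstacle is the coordinated sign bookkeeping on the two sides — the determinant via column swaps and a column replacement, and the orientation via the simplicial boundary formula — both having to produce the same parity $(-1)^{k+l+1}$; the remaining steps (the kernel analysis from Lemma~\ref{lemma: fan-givingness}, connectedness of the dual graph, and the global sign flip) are then routine.
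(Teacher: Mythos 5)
Your argument is correct and takes essentially the same route as the source: the paper does not prove this proposition itself but defers to \cite{Choi-Park2016}, where the proof is the same wall-crossing computation (adjacent maximal cones of a complete fan lie on opposite sides of their common wall, so the determinant sign flips across each ridge exactly as the orientation coefficient does, and connectedness of the dual graph finishes it). The only step you leave implicit is that $c_v<0$ together with $d(\sigma_1),d(\sigma_2)\in\{\pm1\}$ forces $c_v=-1$, which is what your final equality $d(\sigma_2)=(-1)^{k+l+1}d(\sigma_1)$ actually uses.
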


We choose a representative of~$\lambda$ such that the first $n$ columns form the identity matrix.
By Proposition~\ref{proposition: fan-giving positive}, this choice determines the orientation of $K$ and the map $o$ by setting
$$
    o(\{1, \dots, n\})=d(\{1, \dots, n\})=\det(I_n)=1,
$$
in accordance with fan-givingness.
Next, we explicitly construct positive characteristic maps by determining the remaining unknown parameters such that the map~$d$ matches the orientation map~$o$ on all facets.
These resulting maps no longer contain unknown parameters though they might still involve indeterminates.
Finally, by applying Lemma~\ref{lemma: fan-givingness}, we verify whether these explicitly constructed maps are indeed fan-giving.

\begin{example} \label{example}
    Consider a pentagon whose vertices are labeled by $\{2, 3, 4, 5, 6\}$ in cyclic order.
    Let $K$ be the boundary of the two-sided cone over the pentagon, known as the \emph{pentagonal bipyramid}, with its two apexes being~$1$ and~$7$.
    Note that the link of~$1$ in~$K$ is the pentagon.
    Thus, the facet set of~$K$ is given explicitly as
    $$
        \{\{1, 2, 3\}, \{1, 2, 6\}, \{1, 3, 4\}, \{1, 4, 5\}, \{1, 5, 6\}, \{2, 3, 7\}, \{2, 6, 7\}, \{3, 4, 7\}, \{4, 5, 7\}, \{5, 6, 7\}\}.
    $$
    Set the characteristic map~$\lambda$ in the form
    $$
        \lambda = \begin{bmatrix}
                    1 & 0 & 0 & a_0 & b_0 & c_0 & d_0\\
                    0 & 1 & 0 & a_1 & b_1 & c_1 & d_1 \\
                    0 & 0 & 1 & a_2 & b_2 & c_2 & d_2
                  \end{bmatrix}.
    $$
    To ensure that $\lambda$ is fan-giving, we fix an orientation of~$K$ satisfying $o(\{1,2,3\})=1$.
    This choice completely determines the orientation map $o$.
    We then explicitly compute several determinants to match the orientation:
    \begin{align*}
        d(\{1, 3, 4\}) &= \det\left(\begin{bmatrix}
                               1 & 0 & a_0 \\
                               0 & 0 & a_1 \\
                               0 & 1 & a_2
                             \end{bmatrix}\right) = -a_1 = o(\{1, 3, 4\}) = 1, \\
        d(\{1, 2, 6\}) &= \det\left(\begin{bmatrix}
                               1 & 0 & c_0 \\
                               0 & 1 & c_1 \\
                               0 & 0 & c_2
                             \end{bmatrix}\right) = c_2 = o(\{1, 2, 6\}) = -1, \\
        d(\{2, 3, 7\}) &= \det\left(\begin{bmatrix}
                               0 & 0 & d_0 \\
                               1 & 0 & d_1 \\
                               0 & 1 & d_2
                             \end{bmatrix}\right) = d_0 = o(\{2, 3, 7\}) = -1.
    \end{align*}
    According to \cite[Lemma~6.7]{Choi-Park2016}, there are five types of fan-giving characteristic maps over a pentagon up to D-J equivalence:
    $$
        \begin{gathered}
            \begin{bmatrix}
                        1 & 0 & -1 & -1 & x \\
                        0 & 1 & 1 & 0 & -1
                      \end{bmatrix},
            \begin{bmatrix}
                        1 & 0 & -1 & -1 & 0 \\
                        0 & 1 & x & x-1 & -1
                      \end{bmatrix},
            \begin{bmatrix}
                        1 & 0 & -1 & -x & 1-x \\
                        0 & 1 & 0 & -1 & -1
                      \end{bmatrix}, \\
            \begin{bmatrix}
                        1 & 0 & -1 & 0 & 1\\
                        0 & 1 & 1-x & -1 & -1
                      \end{bmatrix}, \text{ and }
            \begin{bmatrix}
                        1 & 0 & -1 & x-1 & 1 \\
                        0 & 1 & 1 & -x & -1
                      \end{bmatrix},
        \end{gathered}
    $$
    where $x$ is an integer indeterminate.
    Thus, the projection~$\proj_{1}(\lambda)$ must match one of these five forms.
    For instance, choosing the first one, we obtain
    $$
        \lambda = \begin{bmatrix}
                    1 & 0 & 0 & a_0 & b_0 & c_0 & -1\\
                    0 & 1 & 0 & -1 & -1 & c_1 & d_1 \\
                    0 & 0 & 1 & 1 & 0 & -1 & d_2
                  \end{bmatrix}.
    $$
    Now, to ensure the compatibility of orientations, we compute the following remaining nontrivial conditions:
    \begin{align*}
        (d-o)(\{2,6,7\}) &= -c_0d_2 = 0,\\
        (d-o)(\{3,4,7\}) &= a_0d_1 = 0,\\
        (d-o)(\{4,5,7\}) &= -a_0d_2 + b_0d_1 + b_0d_2 = 0, \text{ and }\\
        (d-o)(\{5,6,7\}) &= b_0c_1d_2 + b_0d_1 + c_0d_2 =0.
    \end{align*}
    Solving this nonlinear system yields five distinct solutions for characteristic maps:
    $$
        \begin{gathered}
            \begin{bmatrix}
                        1 & 0 & 0 & 0 & 0 & 0 & -1\\
                        0 & 1 & 0 & -1 & -1 & c_1 & d_1 \\
                        0 & 0 & 1 & 1 & 0 & -1 & d_2
                      \end{bmatrix},
            \begin{bmatrix}
                        1 & 0 & 0 & a_0 & b_0 & c_0 & -1\\
                        0 & 1 & 0 & -1 & -1 & c_1 & 0 \\
                        0 & 0 & 1 & 1 & 0 & -1 & 0
                      \end{bmatrix},
            \begin{bmatrix}
                        1 & 0 & 0 & 0 & 0 & c_0 & -1\\
                        0 & 1 & 0 & -1 & -1 & c_1 & d_1 \\
                        0 & 0 & 1 & 1 & 0 & -1 & 0
                      \end{bmatrix}, \\
            \begin{bmatrix}
                        1 & 0 & 0 & b_0 & b_0 & 0 & -1\\
                        0 & 1 & 0 & -1 & -1 & 0 & 0 \\
                        0 & 0 & 1 & 1 & 0 & -1 & d_2
                      \end{bmatrix}, \text{ and }
            \begin{bmatrix}
                        1 & 0 & 0 & 0 & b_0 & 0 & -1\\
                        0 & 1 & 0 & -1 & -1 & 1 & -d_2 \\
                        0 & 0 & 1 & 1 & 0 & -1 & d_2
                      \end{bmatrix},
        \end{gathered}
    $$
    where all symbols represent integer indeterminates.
    The last two characteristic maps are special cases of the second one, considering the symmetries of~$K$.
    For detailed computations and verification, refer to the document ``fanlikes\textbackslash\textbackslash3'' in our database, introduced at the end of Section~\ref{section: fans list}.
    Using Lemma~\ref{lemma: fan-givingness}, we verify that these characteristic maps are all fan-giving.
\end{example}

It is possible to compute positivity directly without fixing any projection of $\lambda$.
However, this approach would lead to redundant computations, since many equations involving links appear repeatedly during the determination of positive characteristic maps.
For instance, the condition $d(\{1,4,5\}) = -b_2-a_2b_1 = 1$ from the previous example naturally arises when computing positivity for the pentagon link.
Thus, fixing a projection first significantly reduces such redundancies, greatly improving computational efficiency.

To compute all fan-giving characteristic maps over a given simplicial complex, we therefore utilize previously computed characteristic maps over its links.
This approach can be viewed as the inductive step of a computational induction process.
Consequently, initial data is required to start this induction.
The first non-trivial links are precisely the $1$-links, and every $1$-link is combinatorially isomorphic to the boundary of an $m$-gon, denoted by $\partial P^2(m)$, for some integer $m \geq 3$.
The fan-giving characteristic maps over $\partial P^2(m)$ have been extensively studied in the literatures, with notable results such as those in~\cite[Theorem~1.28]{Oda1988}.
To ensure this paper is self-contained, we briefly recall some of these known results.

Let $K$ be a simplicial complex isomorphic to $\partial P^2(m)$ with vertex set $\{v_1, v_2, \dots, v_m\}$ arranged in cyclic order, and let $\lambda$ be a fan-giving characteristic map over $K$.
Consider an edge $e$ of $K$ with two end points $v_i, v_{i+1}$ of $K$, and construct a new simplicial complex $K'$ by adding a new vertex $v_e$ subdividing this edge $e$.
The resulting complex $K'$ is then isomorphic to $\partial P^2(m+1)$.
Define a characteristic map $\lambda'$ over $K'$ by
$$
    \lambda_v = \begin{cases}
            \lambda_{v_i} + \lambda_{v_{i+1}}, & \mbox{if } v = v_e \\
            \lambda_{v}, & \mbox{otherwise}.
        \end{cases}
$$
The resulting pair $(K', \lambda')$ is fan-giving.
This operation is essentially a special case of the \emph{non-singular stellar subdivision} introduced by Ewald, originally defined for arbitrary fan-giving characteristic pairs; see \cite{ewald1996combinatorial}.
Indeed, the construction above coincides explicitly with the general definition when the underlying simplicial complex $K$ is restricted to the boundary of a polygon.
The following classical result provides a complete classification of toric manifolds of complex dimension~$2$.

\begin{theorem} \label{theorem: blowup}
    Every fan-giving characteristic pair $( \partial P^2(m), \lambda)$, for $m \geq 5$, is obtained by a non-singular stellar subdivision from a simpler pair.
\end{theorem}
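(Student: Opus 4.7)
The plan is to exhibit a vertex $v_i$ of $K = \partial P^2(m)$ at which $\lambda_{v_i} = \lambda_{v_{i-1}} + \lambda_{v_{i+1}}$; such an equality realizes $(K, \lambda)$ as the non-singular stellar subdivision, at the edge $\{v_{i-1}, v_{i+1}\}$, of the pair on $\partial P^2(m-1)$ obtained by deleting $v_i$. Non-singularity of the resulting smaller fan is automatic, since $\det(\lambda_{v_{i-1}}, \lambda_{v_{i+1}}) = \det(\lambda_{v_{i-1}}, \lambda_{v_i}) = \pm 1$ by the defining relation, and fan-givingness of the deletion then follows from Lemma~\ref{lemma: fan-givingness}.

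To set this up, I would label $v_1, \ldots, v_m$ cyclically and normalize, by D-J equivalence, so that $\lambda_{v_1} = e_1$ and $\lambda_{v_2} = e_2$. For each cyclic index $i$, the pair $\{\lambda_{v_{i-1}}, \lambda_{v_i}\}$ is a $\Z^2$-basis by non-singularity, so $\lambda_{v_{i+1}} = c\lambda_{v_{i-1}} + a_i \lambda_{v_i}$ for unique integers $c, a_i$. Non-singularity at $\{v_i, v_{i+1}\}$ forces $c = \pm 1$, and positivity (Proposition~\ref{proposition: fan-giving positive}) uniformly pins the sign to $c = -1$, giving the recurrence
\[
  \lambda_{v_{i-1}} + \lambda_{v_{i+1}} = a_i \lambda_{v_i}, \qquad i \in \Z/m\Z.
\]
Packaging this as multiplication by $B_i = \left(\begin{smallmatrix} 0 & 1 \\ -1 & a_i \end{smallmatrix}\right) \in \mathrm{SL}_2(\Z)$, the cyclic closure $\lambda_{v_{m+1}} = \lambda_{v_1}$, $\lambda_{v_{m+2}} = \lambda_{v_2}$ becomes the matrix identity $B_m B_{m-1} \cdots B_1 = I$, from which a direct trace expansion (seeded at the base cases $m = 3$, with $(a_i) = (-1,-1,-1)$, and $m = 4$, with $a_1 + a_3 = 0 = a_2 + a_4$) yields the global identity $\sum_{i=1}^{m} a_i = 3m - 12$.

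The crux is now to show that some $a_i$ equals $1$ whenever $m \geq 5$. I would argue by contradiction: suppose no $a_i = 1$, so each $a_i \in \{\dots,-1,0\} \cup \{2,3,\dots\}$. The case ``all $a_i \geq 2$'' is excluded by iterating $\lambda_{v_{i+1}} = a_i \lambda_{v_i} - \lambda_{v_{i-1}}$ from the initial basis: the resulting primitive vectors have strictly increasing argument, so $\lambda_{v_{m+1}}$ never returns to $\lambda_{v_1}$. If instead some $a_{i_0} \leq 0$, I would use strong convexity of the two adjacent $2$-cones at $v_{i_0}$ to restrict the immediate neighbors, and then chase these local constraints around the polygon using both the matrix identity $B_m \cdots B_1 = I$ and the sum formula $\sum a_i = 3m - 12$ to derive a contradiction when $m \geq 5$. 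With an index $i$ satisfying $a_i = 1$ in hand, the construction in the first paragraph completes the proof.

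The main obstacle is precisely this last case analysis: the numerical identity $\sum a_i = 3m - 12$ alone is too weak to force the value $1$ (it admits patterns such as $a_i \equiv 2$ with $m = 12$), so the argument must really invoke the full $\mathrm{SL}_2(\Z)$ closure relation -- equivalently, the fact that the primitive rays wind exactly once around the origin -- rather than only its numerical consequences.
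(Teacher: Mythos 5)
The paper does not actually prove this statement: it is recalled as a classical fact (the classification of complete non-singular $2$-dimensional fans, cited to \cite[Theorem~1.28]{Oda1988}), so there is no internal argument to compare against. Your overall strategy --- normalize so that $\lambda_{v_{i-1}}+\lambda_{v_{i+1}}=a_i\lambda_{v_i}$ for all $i$, find an index with $a_i=1$, and contract $v_i$ --- is exactly the standard route, and the first paragraph of your proposal (the derivation of the recurrence from unimodularity and positivity, and the observation that $a_i=1$ realizes the pair as a non-singular stellar subdivision of the deletion) is sound.

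The gap is that the entire content of the theorem is the existence of an index with $a_i=1$ when $m\ge 5$, and your proposal does not establish it. The case ``some $a_{i_0}\le 0$'' is dispatched with ``chase these local constraints around the polygon \dots to derive a contradiction,'' which is a plan rather than an argument, and --- as you concede in your closing paragraph --- it is precisely where the difficulty lives; note that $\mathbb{C}P^2$ has all $a_i=-1$, so nonpositive $a_i$ are in no way locally pathological. Two further points. First, the identity $\sum a_i=3m-12$ cannot follow from ``a direct trace expansion'' of the closure relation $B_m\cdots B_1=I$: that matrix identity is equally satisfied by unimodular ray sequences of any winding number $k$, for which $\sum a_i=3m-12k$, so the identity is a consequence of winding number one and not of the $\mathrm{SL}_2(\Z)$ relation --- consistent with your final remark but inconsistent with how you claim to obtain the formula. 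Second, the subcase ``all $a_i\ge2$'' needs more care than ``strictly increasing argument,'' since the arguments of the rays increase strictly around any complete fan; what must be shown is that the total turning stays below $2\pi$, for instance by checking inductively that every increment $\lambda_{v_{j+1}}-\lambda_{v_j}=(a_j-2)\lambda_{v_j}+\bigl(\lambda_{v_j}-\lambda_{v_{j-1}}\bigr)$ lies in a fixed strongly convex cone, so that the sequence drifts to infinity and cannot close up. A correct completion (via Fulton's maximal-coordinate argument between an antipodal pair of rays, or Oda's induction) is genuinely several steps of work; as written, the proposal identifies the right statement to prove but defers its proof.
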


In this paper, we focus exclusively on simplicial complexes $K$ with Picard number~$4$.
Therefore, any $1$-link in $K$ must have Picard number at most $4$.
Equivalently, each $1$-link has at most $6$ vertices.

It is well-known that a toric manifold whose underlying complex is the boundary of a quadrilateral is precisely a Hirzebruch surface.
In the language of characteristic pairs, every fan-giving characteristic map over $\partial P^2(4)$, up to fan isomorphism, is represented by
$\begin{bmatrix}
    1 & 0 & -1 & x \\
    0 & 1 & 0 & -1
\end{bmatrix},$ where $x$ is an integer indeterminate.

By applying Theorem~\ref{theorem: blowup}, we obtain the following characteristic maps over $\partial P^2(5)$:
$$
\begin{gathered}
    \begin{bmatrix}
        1 & 1 & 0  & -1 & x \\
        0 & 1 & 1 & 0 & -1
    \end{bmatrix},
    \begin{bmatrix}
    1 & 0 & -1 & -1 & x \\
    0 & 1 & 1 & 0 & -1
    \end{bmatrix},
    \begin{bmatrix}
    1 & 0 & -1 & x-1 & x \\
    0 & 1 & 0 & -1 & -1
    \end{bmatrix}, \text{ and }
    \begin{bmatrix}
    1 & 0 & -1 & x & x+1 \\
    0 & 1 & 0 & -1 & -1
    \end{bmatrix}.
\end{gathered}
$$
However, all four cases above are fan-isomorphic.
Thus, up to fan-isomorphism, there is only one type of fan over the boundary of a pentagon, represented by
$$
    \begin{bmatrix}
        1 & 0 & -1 & -1 & x \\
        0 & 1 & 1 & 0 & -1
    \end{bmatrix}.
$$

Similarly, one finds exactly three types of fans over the boundary of a hexagon:
$$
\begin{gathered}
    \begin{bmatrix}
        1 & 1 & 0 & -1 & -1 & x \\
        0 & 1 & 1 & 1 & 0 & -1
    \end{bmatrix},
    \begin{bmatrix}
        1 & 0 & -1 & -1 & -1 & x \\
        0 & 1 & 2 & 1 & 0 & -1
    \end{bmatrix}, \text{ and }
    \begin{bmatrix}
        1 & 0 & -1 & -1 & x-1 & x \\
        0 & 1 & 1 & 0 & -1 & -1
    \end{bmatrix}.
\end{gathered}
$$

\begin{remark}
As discussed previously, it is beneficial to select a link with the largest possible Picard number to minimize unknown parameters in computations.
If every $1$-link of $K$ is a triangle or a quadrilateral, then $K$ is known to be isomorphic to the boundary complex of the dual of the product of simplices (\cite{Wiemeler2015,Yu-Masuda2021}).
Such complexes are well-known to be fanlike, and their fan-giving characteristic maps are already been completely classified; see, for instance, \cite[Theorem~6.4]{Choi-Masuda-Suh2010} or \cite[Theorem~6]{Dobrinskaya2001}.
Among these, the only seed is the boundary complex of a $4$-crosspolytope (that is, the dual of a $4$-cube), which will be discussed separately in Subsection~\ref{subsection:Cube_case}.e
Therefore, in our computation, almost all remaining cases involve pentagonal or hexagonal $1$-links.
\end{remark}
%
%

\section{Complete non-singular fans over the seeds with Picard number~$4$} \label{section: fans list}
In this section, we only consider seeds that support at least one characteristic map.
All such seeds are contained in the list obtained in the authors' previous paper \cite{Choi-Jang-Vallee2024}, as summarized numerically in Theorem~\ref{theorem: classification}.
For convenience, we temporarily denote this list by $\cS$.

We say that a PL~sphere that supports a characteristic map is \emph{minimally non-fanlike} if it is not fanlike, but the link of every vertex is fanlike.
By Theorem~\ref{theorem: wedge fan-giving}, if a PL~sphere is minimally non-fanlike, then it must be a seed, and hence belongs to $\cS$.

Initially, we set $\fK$ and $\fL$ as empty sets.
We will eventually populate $\fK$ with the set of fanlike seeds, and $\fL$ with the set of minimally non-fanlike seeds, both having Picard number at most~$4$.
Based on the results of the previous section, we first place into $\fK$ all seeds in $\cS$ of dimension at most~$1$ (equivalently, $n\leq2$)  and of Picard number at most $4$.
In particular, this includes the boundary of a hexagon, denoted by $\cK^1_0 := \partial P^2(6)$, which has Picard number~$4$.

For $n\geq 3$, assume that we have already established complete information on fan-giving characteristic maps over all seeds with Picard number up to~$4$ and dimensions up to $n-2$.
Under this assumption, we examine each seed $K \in \cS$ of dimension~$n-1$ and determine whether $K$ belongs to~$\fK$, $\fL$, or neither of them, by applying the following procedure:
\begin{enumerate}
  \item If $K$ has a face whose link is in~$\fL$, then $K$ belongs to neither~$\fK$ nor~$\fL$.
  \item For each vertex $v$ of $K$ and each fan-giving characteristic map over $\lk_K(v)$, explicitly compute positive characteristic maps over $K$ as in Example~\ref{example}, and verify their fan-givingness using Lemma~\ref{lemma: fan-givingness}.
      If no fan-giving characteristic map exists, then place $K$ into~$\fL$.
  \item If $K$ successfully passed the above two steps, place it into~$\fK$.
\end{enumerate}

Step~(1) comes from Proposition~\ref{proposition: link fan-giving}, which states that every link in a fanlike PL~sphere must be fanlike.
In particular, if $K$ has a non-fanlike link, then it necessarily contains a minimally non-fanlike link.
Since the set $\fL$ already contains all lower-dimensional minimally non-fanlike seeds, this check suffices.

By applying the above procedure to every seed $K \in \cS$, we obtain a complete classification of fanlike and minimally non-fanlike seeds with Picard number at most~$4$.
As a result, the set $\fL$ consists of exactly $15$ minimally non-fanlike seeds up to simplicial isomorphism.
Their distribution is as follows:
\begin{center}
    \begin{tabular}{l*{4}{c}r}
    	\toprule
    	& \multicolumn{4}{c}{n}\tabularnewline  \cmidrule(lr){2-5}
    	$p$& $1,2,3$ & $4$ & $5$ &$>5$& total \tabularnewline \midrule
    	$2$& & &&& $0$\tabularnewline
    	$3$& & $1$& & & $1$\tabularnewline
    	$4$&  & $11$ & $3$ &  & $14$\tabularnewline
    	\bottomrule
    \end{tabular}
\end{center}
where the empty slots display zeroes.

Each minimally non-fanlike seed is denoted by $\cL^{n-1}_i$, where the superscript~$n-1$ represents the dimension and the subscript~$i$ is an index used to distinguish them.
The index starts from~$0$, following the indexing convention of our programming language.
Although a seed is often described by its set of facets, this representation is impractical in our setting due to the large number of facets involved.
To avoid listing them explicitly, we instead use the set of minimal non-faces~$\cM(K)$ for each simplicial complex~$K$, which fully determines the complex and tends to be significantly smaller.
For simplicity, each minimal non-face is represented by a string of concatenated vertex labels (e.g., the set $\{1,2,3\}$ is written as $123$) rather than using set notation.

In Table~\ref{table:minimally non-fanlike seeds}, the $15$ minimally non-fanlike seeds with Picard number at most~$4$ are represented by their sets of minimal non-faces.
In particular, $\cL^{3}_{0}$ is known as the boundary $\partial C^{4}(7)$ of the cyclic $4$-polytope with $7$ vertices, while $\cL^{3}_{7}$ and $\cL^{3}_{10}$ are known as the Br\"{u}ckner sphere~\cite{Grunbaum-Sreedharan1967} and the Barnette sphere~\cite{Barnette1973}, respectively.


\renewcommand{\arraystretch}{1.3}
\begin{table}
\centering
\begin{tabular}{cl}
\toprule
$K$ & Minimal non-faces $\cM(K)$ \\
\midrule
$\cL^3_0$ & $\{135, 136, 146, 246, 247, 257, 357\}$ \\
$\cL^3_1$ & $\{25, 57, 58, 126, 127, 168, 237, 347, 348, 468, 1346\}$ \\
$\cL^3_2$ & $\{18, 125, 126, 157, 236, 245, 246, 346, 347, 368, 378, 457, 578\}$ \\
$\cL^3_3$ & $\{37, 57, 125, 126, 127, 136, 245, 348, 368, 458, 468\}$ \\
$\cL^3_4$ & $\{26, 36, 128, 137, 138, 148, 245, 258, 347, 457, 567\}$ \\
$\cL^3_5$ & $\{15, 126, 148, 168, 237, 246, 248, 267, 347, 348, 357, 358, 567\}$ \\
$\cL^3_6$ & $\{25, 126, 127, 136, 137, 168, 268, 347, 348, 357, 457, 458, 468\}$ \\
$\cL^3_7$ & $\{126, 127, 136, 137, 138, 157, 238, 245, 256, 268, 347, 348, 457, 458, 468, 567\}$ \\
$\cL^3_8$ & $\{126, 127, 136, 137, 138, 168, 245, 256, 257, 268, 347, 348, 357, 457, 458, 468\}$ \\
$\cL^3_9$ & $\{25, 57, 58, 126, 127, 137, 268, 347, 348, 468, 1346\}$ \\
$\cL^3_{10}$ & $\{37, 125, 126, 127, 136, 168, 235, 245, 348, 457, 458, 468, 678\}$ \\
$\cL^3_{11}$ & $\{125, 126, 127, 136, 137, 157, 245, 258, 268, 347, 348, 367, 368, 457, 458, 468\}$ \\
$\cL^4_{12}$ & $\{38, 126, 246, 349, 468, 469, 479, 579, 1257, 1258, 1357\}$ \\
$\cL^4_{13}$ & $\{146, 149, 168, 249, 257, 259, 348, 349, 378, 379, 1256, 5678\}$ \\
$\cL^4_{14}$ & $\{126, 127, 179, 246, 349, 358, 468, 469, 479, 579, 1258, 1357, 2368\}$ \\
\bottomrule
\end{tabular}
\caption{Minimal non-faces of the $15$ minimally non-fanlike seeds}
\label{table:minimally non-fanlike seeds}
\end{table}
\renewcommand{\arraystretch}{1.0}

\begin{remark}
    In most cases, seeds are filtered out in step~(2) of the procedure because they do not admit any positive characteristic map compatible with those of their links.
    However, the seeds $\cL^{3}_{1}$ and $\cL^{3}_{2}$ are exceptional in that they do admit positive characteristic maps, yet none of them are fan-giving.
\end{remark}

As a consequence, we obtain the following powerful criterion for determining whether a PL~sphere with Picard number at most~$4$ is fanlike.
\begin{theorem} \label{thm:characterization_fanlike}
  Let $K$ be a PL~sphere with Picard number at most~$4$ that supports at least one characteristic map.
  Then $K$ is not fanlike if and only if it contains $\cL^{n-1}_i$ as the link of a (possibly empty) face, for some $0 \leq i \leq 14$.
\end{theorem}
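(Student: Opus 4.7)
The statement factors into two implications. For the reverse one, assume $\lk_K(\sigma) \cong \cL^{d}_i \in \fL$ for some face $\sigma$ (possibly empty) of $K$. If $K$ were fanlike with fan-giving characteristic map $\lambda$, then by Proposition~\ref{proposition: link fan-giving}, $\proj_\sigma(\lambda)$ would be a fan-giving characteristic map on $\lk_K(\sigma) \cong \cL^{d}_i$, contradicting the fact that the members of $\fL$ are not fanlike. Hence $K$ is not fanlike, and the reverse direction is settled with essentially no work.

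For the forward implication, I would run a \emph{maximal bad face} argument. Assume $K$ is not fanlike. The collection of faces $\sigma$ of $K$ (including the empty face) satisfying ``$\lk_K(\sigma)$ is not fanlike'' is non-empty, since $\lk_K(\emptyset) = K$ lies in it. Pick $\sigma$ in this collection with $|\sigma|$ maximal and set $L := \lk_K(\sigma)$. For every vertex $w$ of $L$, maximality forces $\lk_L(w) = \lk_K(\sigma \cup \{w\})$ to be fanlike. Moreover, if $\lambda$ is any characteristic map on $K$ (which exists by hypothesis), then $\proj_\sigma(\lambda)$ is a characteristic map on $L$: the non-singularity of $\lambda$ on the facet $\tau \cup \sigma$ of $K$ passes to the projected non-singularity on the facet $\tau$ of $L$ after quotienting by $\langle \lambda(v) : v \in \sigma \rangle$. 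Thus $L$ is minimally non-fanlike in the sense defined at the start of this section.

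It remains to verify that $L$ lies in the classified range, so that the enumerated list $\fL$ applies. The link $L$ has dimension $n - 1 - |\sigma|$, and its vertex set is contained in $[m] \setminus \sigma$, so $L$ has at most $m - |\sigma|$ vertices. Its Picard number is therefore at most $(m - |\sigma|) - (n - |\sigma|) = m - n \leq 4$. Combined with minimal non-fanlikeness, this forces $L \cong \cL^{n-1-|\sigma|}_i$ for some $0 \leq i \leq 14$, exhibiting $\sigma$ as the required face of $K$ whose link realizes one of the $\cL^{d}_i$.

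The logical scaffolding above is essentially a formal consequence of Proposition~\ref{proposition: link fan-giving}, the definition of minimal non-fanlikeness, and a one-line Picard-number bound. The real work, and the main obstacle, lies not in this theorem but in the explicit enumeration of the $15$ members of $\fL$ via the three-step procedure described at the start of this section, which in turn rests on the fan-givingness criterion of Lemma~\ref{lemma: fan-givingness} together with the link-by-link determination of positive characteristic maps illustrated in Example~\ref{example}; once that classification is in hand, the present theorem is a clean corollary.
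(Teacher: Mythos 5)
Your overall architecture matches the paper's (implicit) argument: the reverse direction is exactly Proposition~\ref{proposition: link fan-giving}, and the forward direction is the observation that a non-fanlike sphere contains a \emph{minimally} non-fanlike link (your maximal-bad-face argument; the paper phrases this as ``if $K$ has a non-fanlike link, then it necessarily contains a minimally non-fanlike link''), which then has Picard number at most $m-n\leq 4$ and supports a characteristic map via projection. All of that is correct, including the check that $\proj_\sigma(\lambda)$ is a characteristic map on the link, which is needed for the link to qualify as ``minimally non-fanlike'' under the paper's definition.

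There is, however, one genuine gap in your final step. You conclude that ``minimal non-fanlikeness'' plus the Picard-number bound ``forces $L \cong \cL^{n-1-|\sigma|}_i$,'' but the list $\fL$ was compiled by running the three-step procedure only over the finite list $\cS$ of \emph{seeds} from the authors' earlier classification. A minimally non-fanlike sphere that failed to be a seed would never have been examined, so nothing in your argument rules out such an $L$ escaping the enumeration. The missing ingredient is the implication ``minimally non-fanlike $\Rightarrow$ seed,'' which the paper derives from Theorem~\ref{theorem: wedge fan-giving}: if $L=\wed_v(L')$, then $L'\cong\lk_L(v)$ is fanlike by minimality, and duplicating the distinguished row of a fan-giving characteristic map over $L'$ as in~\eqref{eq: wedged map} produces a characteristic map over $L$ both of whose projections are fan-giving; Theorem~\ref{theorem: wedge fan-giving} then makes $L$ fanlike, a contradiction. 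With that one sentence inserted, your proof is complete and coincides with the paper's; as you rightly note, the substantive mathematical content lives in the computational enumeration of the $15$ members of $\fL$, not in this deduction.
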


We also find that the set $\fK$, obtained through the procedure, contains exactly $62$ fanlike seeds, among which $59$ have Picard number~$4$.
In particular, there exists only one fanlike seed in dimension~$7$, and no fanlike seeds exist in higher dimensions.
Each fanlike seed with Picard number~$4$ is denoted by $\cK^{n-1}_i$ for $0 \leq i \leq 58$, following the same labeling convention as for the minimally non-fanlike seeds, and is described by its set of minimal non-faces.
For clarity, when a seed has more than nine vertices, we denote the vertices~$10$, $11$, and~$12$ by the letters~$A$, $B$, and~$C$, respectively.
Their distribution is as follows:
\begin{center}
    \begin{tabular}{l*{8}{c}r}
    	\toprule
    	& \multicolumn{8}{c}{n}\tabularnewline  \cmidrule(lr){2-9}
    	$p$& $2$ & $3$ & $4$ & $5$ & $6$ &$7$& $8$& $>8$& total \tabularnewline \midrule
    	$2$& $1$ &&& &&&&& $1$\tabularnewline
    	$3$& $1$& $1$& && &&& & $2$\tabularnewline
    	$4$& $1$ & $4$& $10$& $16$ & $18$ &$9$&$1$&  & $59$\tabularnewline
    	\bottomrule
    \end{tabular}
\end{center}
where the empty slots display zeroes.

A major strength of the proposed procedure is that it not only identifies all fanlike seeds, but also provides complete information about all fan-giving characteristic maps over them.
The rest of this section is devoted to presenting all fan-giving characteristic maps over each fanlike seed with Picard number~$4$, up to fan isomorphism.
Up to D-J equivalence, we assume that the first $n$~columns of the characteristic maps form the identity matrix, and we omit these columns to save space.
Additionally, we provide the symmetric group of $\cK^{n-1}_i$, expressed as the list of vertex permutations in one-line notation that induce automorphisms of $\cK^{n-1}_{i}$.
The D-J equivalence classes over $\cK_i^{n-1}$ are then generated by the given characteristic maps and symmetries.
All symbols $x,x_0,x_1,x_2,x_3$ appearing in the characteristic maps below represent integer indeterminates.
Moreover, note that the maximum number of indeterminates is $4$ and appear in $\cM(\cK^{2}_{3})$.

\setcounter{subsection}{-1}
\subsection{$\cM(\cK^{1}_{0}) = \{13, 14, 15, 24, 25, 26, 35, 36, 46\}$}
\hfill

This is the boundary complex of a hexagon.
The toric manifolds over a sequential wedge operation starting from the boundary of a polygon (including the hexagon as a special case) are classified by Choi and Park \cite{CP_CP_variety}.
Moreover, it is shown there that all such toric manifolds are projective, generalizing the earlier results of Kleinschmidt and Sturmfels~\cite{Kleinschmidt-Sturmfels1991}.

\begin{itemize}
  \item The symmetric group consists of rotations and reflections.
  \item The fan-giving characteristic maps are
              $$\begin{gathered}
                \begin{bmatrix}
                    -1 & -1 & -1 & x \\
                    2 & 1 & 0 & -1
                  \end{bmatrix},
                \begin{bmatrix}
                    -1 & -2 & -1 & x \\
                    1 & 1 & 0 & -1
                  \end{bmatrix},
                \begin{bmatrix}
                    -1 & -1 & x-1 & x \\
                    1 & 0 & -1 & -1
                  \end{bmatrix}.
            \end{gathered}$$
\end{itemize}

\subsection{$\cM(\cK^{2}_{1}) = \{24, 36, 37, 46, 47, 57, 125, 126, 135\}$}
\begin{itemize}
  \item The symmetric group is $$ \{[1, 2, 3, 4, 5, 6, 7], [1, 5, 6, 7, 2, 3, 4]\}. $$
  \item The fan-giving characteristic maps are
            $$\begin{gathered}
                \begin{bmatrix}
                    x_0 & -1 & 0 & 1 \\
                    -1 & 0 & 1 & 2 \\
                    x_1 & -1 & -1 & -1
                \end{bmatrix},
                \begin{bmatrix}
                    x_0 & -1 & -1 & 0 \\
                    -1 & 0 & 1 & 1 \\
                    x_1 & -1 & -2 & -1
                \end{bmatrix},
                \begin{bmatrix}
                    0 & -1 & 0 & 1 \\
                    -1 & x - 2 & x - 1 & x \\
                    0 & -1 & -1 & -1
                \end{bmatrix}, \\
                \begin{bmatrix}
                    0 & -1 & -1 & 0\\
                    -1 & x - 1 & 2x - 1 & x\\
                    0 & -1 & -2 & -1
                \end{bmatrix},
                \begin{bmatrix}
                    0 & -1 & x_0 - 1 & x_0\\
                    -1 & -1 & x_1 - 1 & x_1\\
                    1 & 0 & -1 & -1
                \end{bmatrix},
                \begin{bmatrix}
                    0 & -1 & 0 & 1\\
                    -1 & x & 1 & 2\\
                    1 & -x - 1 & -1 & -1
                \end{bmatrix}, \\
                \begin{bmatrix}
                    0 & -1 & -1 & 0\\
                    -1 & -x - 2 & -x - 1 & 1\\
                    1 & x + 1 & x & -1
                \end{bmatrix},
                \begin{bmatrix}
                    0 & -1 & 0 & 1\\
                    -1 & -1 & 0 & 1\\
                    x & x - 1 & -1 & -1
                \end{bmatrix}.
            \end{gathered}$$
\end{itemize}

\subsection{$\cM(\cK^{2}_{2}) = \{14, 17, 25, 27, 36, 37, 456\}$}
\hfill

This is the boundary complex of the dual of the vertex cut of a $3$-cube.
The toric manifolds supported by this complex have been classified by Hasui et al~\cite{HKMP2020}.
This complex is particularly important because it supports the first known example of a non-projective toric manifold, originally introduced by Oda~\cite{Oda1988}.

\begin{itemize}
  \item The symmetric group is
   \begin{align*}
   \{&[1, 2, 3, 4, 5, 6, 7], [1, 3, 2, 4, 6, 5, 7], [2, 1, 3, 5, 4, 6, 7], \\ &[2, 3, 1, 5, 6, 4, 7], [3, 1, 2, 6, 4, 5, 7], [3, 2, 1, 6, 5, 4, 7]\}. \end{align*}
  \item The fan-giving characteristic maps are
            $$\begin{gathered}
              \begin{bmatrix}
                    -1 & 0 & 1 & x_0\\
                    1 & -1 & 0 & x_1\\
                    0 & 1 & -1 & -x_0 - x_1 - 1
                \end{bmatrix},
              \begin{bmatrix}
                    -1 & 0 & 0 & -1\\
                    x_0 & -1 & 0 & x_0 - 1\\
                    x_1 & x_2 & -1 & x_1 + x_2 - 1
                \end{bmatrix}, \\
              \begin{bmatrix}
                    -1 & 0 & -1 & -1\\
                    -1 & -1 & 0 & -1\\
                    0 & -1 & -1 & -1
                \end{bmatrix},
              \begin{bmatrix}
                    -1 & 0 & x & -1\\
                    1 & -1 & 0 & 0\\
                    0 & 1 & -1 & 0
                \end{bmatrix}.
            \end{gathered}$$
\end{itemize}
\subsection{$\cM(\cK^{2}_{3}) = \{17, 24, 25, 35, 36, 46\}$}
\hfill

This is the boundary complex of a pentagonal bipyramid as introduced in Example~\ref{example}.
Several specific toric manifolds supported by this complex  are discussed and classified in \cite{Choi-Park2016IJM}.

\begin{itemize}
  \item The symmetric group is the direct product of two groups: \begin{itemize}
                                                                   \item the symmetric group of the set $\{1, 7\}$,
                                                                   \item the symmetric group of the induced subcomplex on $\{2,3,4,5,6\}$ of $\cK^2_2$, which is the boundary of a pentagon.
                                                                 \end{itemize}
  \item The fan-giving characteristic maps are
  $$\begin{gathered}
    \begin{bmatrix}
                    0 & 0 & 0 & -1\\
                    -1 & -1 & x_0 & x_1\\
                    1 & 0 & -1 & x_2
                \end{bmatrix},
    \begin{bmatrix}
                    x_0 & x_1 & x_2 & -1\\
                    -1 & -1 & x_3 & 0\\
                    1 & 0 & -1 & 0
                \end{bmatrix},
    \begin{bmatrix}
                    0 & 0 & x_0 & -1\\
                    -1 & -1 & x_1 & x_2\\
                    1 & 0 & -1 & 0
                \end{bmatrix}.
  \end{gathered}$$
\end{itemize}
\subsection{$\cM(\cK^{2}_{4}) = \{25, 35, 36, 37, 47, 57, 124, 126, 146\}$}
\begin{itemize}
  \item The symmetric group is
        \begin{align*}
          \{&[1, 2, 3, 4, 5, 6, 7], [1, 2, 7, 6, 5, 4, 3], [1, 4, 3, 2, 7, 6, 5],\\
            &[1, 4, 5, 6, 7, 2, 3], [1, 6, 7, 2, 3, 4, 5], [1, 6, 5, 4, 3, 2, 7]\}.
        \end{align*}
  \item The fan-giving characteristic maps are
            $$\begin{gathered}
              \begin{bmatrix}
                    -1 & 0 & 0 & 1\\
                    -1 & 0 & 1 & 2\\
                    x & -1 & -1 & -1
                \end{bmatrix},
              \begin{bmatrix}
                    -1 & -1 & -1 & x_0\\
                    -1 & -2 & -1 & x_1\\
                    1 & 1 & 0 & -1
                \end{bmatrix},
              \begin{bmatrix}
                    -1 & 0 & 0 & 1\\
                    -1 & -1 & x - 1 & x\\
                    1 & 0 & -1 & -1
                \end{bmatrix}.
            \end{gathered}$$
\end{itemize}
\subsection{$\cM(\cK^{3}_{5}) = \{15, 26, 37, 48\}$} \label{subsection:Cube_case}
\hfill

This is the boundary complex of the $4$-crosspolytope.
A sequential wedge operation starting from $\cK^3_5$ is the boundary of the dual of the product of four simplices.
The toric manifolds over these are called generalized Bott manifolds.
Refer to \cite{Choi-Masuda-Suh2010TAMS} or \cite{Choi-Masuda-Suh2010} for more details. 

\subsection{$\cM(\cK^{3}_{6}) = \{37, 38, 57, 58, 127, 136, 245, 468, 1246\}$}
\begin{itemize}
  \item The symmetric group is \begin{align*}
                                 \{&[1, 2, 3, 4, 5, 6, 7, 8], [1, 6, 7, 4, 8, 2, 3, 5], [2, 1, 5, 6, 3, 4, 7, 8], [2, 4, 7, 6, 8, 1, 5, 3],\\
                                   &[4, 2, 8, 1, 7, 6, 5, 3], [4, 6, 5, 1, 3, 2, 8, 7], [6, 1, 8, 2, 7, 4, 3, 5], [6, 4, 3, 2, 5, 1, 8, 7] \}.
                               \end{align*}
  \item The fan-giving characteristic maps are
            $$\begin{gathered}
              \begin{bmatrix}
                    x_0 & -1 & 0 & 1\\
                    -1 & 0 & 1 & 2\\
                    x_1 & -1 & -1 & -1\\
                    -1 & 0 & 1 & 1
                \end{bmatrix},
              \begin{bmatrix}
                    0 & -1 & x_0 - 1 & x_0\\
                   -1 & -1 & x_1 - 1 & x_1\\
                    1 & 0 & -1 & -1\\
                    -1 & -1 & x_2 & x_2
                \end{bmatrix},
              \begin{bmatrix}
                    0 & -1 & 0 & 1\\
                    -1 & x & 1 & 2\\
                    1 & -x - 1 & -1 & -1\\
                    -1 & x & 1 & 1
                \end{bmatrix}.
            \end{gathered}$$
\end{itemize}
\subsection{$\cM(\cK^{3}_{7}) = \{26, 36, 37, 68, 125, 127, 348, 458, 1457\}$}
\begin{itemize}
  \item The symmetric group is trivial.
  \item The fan-giving characteristic maps are
            $$\begin{gathered}
              \begin{bmatrix}
                    -1 & -1 & -1 & x_0\\
                    -1 & -2 & -1 & x_1\\
                    1 & 1 & 0 & -1\\
                    0 & 1 & 0 & -1
                \end{bmatrix},
              \begin{bmatrix}
                    -1 & x_0 - 1 & x_0 - 1 & x_0\\
                    -1 & x_1 - 2 & x_1 - 1 & x_1\\
                    0 & -1 & -1 & -1\\
                    0 & 1 & 0 & -1
                \end{bmatrix},
              \begin{bmatrix}
                    -1 & -1 & -x - 1 & 1\\
                    -1 & -2 & -2x - 1 & 2\\
                    1 & 1 & x & -1\\
                    0 & 1 & x & -1
                \end{bmatrix}, \\
              \begin{bmatrix}
                    -1 & 0 & 0 & 1\\
                    -1 & -1 & x - 1 & x\\
                    1 & 0 & -1 & -1\\
                    0 & 0 & -1 & -1
                \end{bmatrix},
              \begin{bmatrix}
                    -1 & x_0 & 0 & 1\\
                    -1 & x_1 & 1 & 2\\
                    1 & -x_1 - 1 & -1 & -1\\
                    0 & x_2 & -1 & -1
                \end{bmatrix},
              \begin{bmatrix}
                    -1 & -x_0 - 1 & -x_0 - 1 & 1\\
                    -1 & -x_0 - 2 & -x_0 - 1 & 1\\
                    1 & x_0 + 1 & x_0 & -1\\
                    0 & x_1 + 1 & x_1 & -1
                \end{bmatrix}, \\
              \begin{bmatrix}
                    -1 & -1 & -1 & 0\\
                    -1 & -2 & -1 & 0\\
                    x_0 + 1 & 2x_0 + 1 & x_0 & -1\\
                    x_1 & 2 x_1 + 1 & x_1 & -1
                \end{bmatrix},
              \begin{bmatrix}
                    -1 & 0 & 0 & 1\\
                    -1 & -1 & 0 & 1\\
                    x & x - 1 & -1 & -1\\
                    0 & 1 & 0 & -1
                \end{bmatrix}.
            \end{gathered}$$
\end{itemize}
\subsection{$\cM(\cK^{3}_{8}) = \{26, 28, 37, 145, 148, 156, 348, 567\}$}
\begin{itemize}
  \item The symmetric group is $$ \{ [1, 2, 3, 4, 5, 6, 7, 8], [1, 2, 7, 5, 4, 8, 3, 6] \}.$$

  \item The fan-giving characteristic maps are
            $$\begin{gathered}
              \begin{bmatrix}
                    -1 & -1 & 0 & -1\\
                    0 & -1 & -1 & -1\\
                    -1 & 0 & -1 & -1\\
                    -1 & 0 & 0 & -1
                \end{bmatrix},
              \begin{bmatrix}
                    -1 & 0 & x_0 & -1\\
                    1 & -1 & 0 & 0\\
                    0 & 1 & -1 & 0\\
                    -1 & 1 & x_1 & -1
                \end{bmatrix},
              \begin{bmatrix}
                    -1 & 0 & 1 & 0\\
                    x & -1 & 0 & -1\\
                    0 & 1 & -1 & 0\\
                    -1 & 1 & 0 & 0
                \end{bmatrix}, \\
              \begin{bmatrix}
                    -1 & 0 & 0 & -1\\
                    1 & -1 & 0 & 0\\
                    x_0 & x_1 & -1 & x_0 + x_1 - 1\\
                    -1 & 1 & 0 & -1
                \end{bmatrix},
              \begin{bmatrix}
                    -1 & x_0 & 1 & x_0\\
                    0 & -1 & 0 & -1\\
                    0 & x_1 & -1 & x_1 - 1\\
                    -1 & x_2 & 0 & x_2 - 1
                \end{bmatrix},
              \begin{bmatrix}
                    -1 & 0 & 1 & 0\\
                    1 & -1 & 0 & 0\\
                    0 & x_0 & -1 & -1\\
                    -1 & x_0 & 0 & -1
                \end{bmatrix}.
            \end{gathered}$$
\end{itemize}
\subsection{$\cM(\cK^{3}_{9}) = \{25, 58, 68, 126, 127, 136, 347, 348, 457\}$}
\begin{itemize}
  \item The symmetric group is $$\{[1, 2, 3, 4, 5, 6, 7, 8], [1, 6, 7, 4, 8, 2, 3, 5]\}.$$
  \item The fan-giving characteristic maps are
            $$\begin{gathered}
              \begin{bmatrix}
                    x_0 & -1 & 0 & 1\\
                    -1 & 0 & 1 & 2\\
                    x_1 & -1 & -1 & -1\\
                    0 & 0 & -1 & -1
                \end{bmatrix},
              \begin{bmatrix}
                    1 & -1 & -1 & 0\\
                    -1 & 0 & 1 & 1\\
                    2 & -1 & -2 & -1\\
                    1 & -1 & -2 & -1
                \end{bmatrix},
              \begin{bmatrix}
                    0 & -1 & 0 & 1\\
                    -1 & x - 2 & x - 1 & x\\
                    0 & -1 & -1 & -1\\
                    0 & 0 & -1 & -1
                \end{bmatrix}, \\
              \begin{bmatrix}
                    0 & -1 & x_0 - 1 & x_0\\
                    -1 & -1 & x_1 - 1 & x_1\\
                    1 & 0 & -1 & -1\\
                    0 & 0 & -1 & -1
                \end{bmatrix},
              \begin{bmatrix}
                    0 & -1 & 0 & 1\\
                    -1 & x_0 & 1 & 2\\
                    1 & -x_0 - 1 & -1 & -1\\
                    0 & x_1 & -1 & -1
                \end{bmatrix},
              \begin{bmatrix}
                    0 & -1 & 0 & 1\\
                    -1 & -1 & 0 & 1\\
                    x_0 & x_0 - 1 & -1 & -1\\
                    x_1 & x_1 & -1 & -1
                \end{bmatrix}.
            \end{gathered}$$
\end{itemize}
\subsection{$\cM(\cK^{3}_{10}) = \{37, 68, 125, 126, 127, 136, 245, 348, 457, 458\}$}
\begin{itemize}
  \item The symmetric group is $$\{[1, 2, 3, 4, 5, 6, 7, 8], [4, 5, 3, 1, 2, 8, 7, 6]\}.$$

  \item The fan-giving characteristic map is
            $$\begin{gathered}
              \begin{bmatrix}
                    -1 & -1 & 0 & 1\\
                    -1 & 0 & 1 & 2\\
                    0 & -1 & -1 & -1\\
                    0 & 1 & 0 & -1
                \end{bmatrix}.
            \end{gathered}$$
\end{itemize}
\subsection{$\cM(\cK^{3}_{11}) = \{26, 28, 125, 145, 148, 157, 347, 348, 367, 368, 567\}$}
\begin{itemize}
  \item The symmetric group is $$\{[1, 2, 3, 4, 5, 6, 7, 8], [5, 2, 3, 7, 1, 8, 4, 6]\}.$$

  \item The fan-giving characteristic map is $$\begin{gathered} \begin{bmatrix}
                    -1 & 1 & 0 & 0\\
                    0 & -1 & 2 & 1\\
                    0 & 0 & -1 & -1\\
                    -1 & 1 & -1 & -1
                \end{bmatrix}. \end{gathered}$$
\end{itemize}
\subsection{$\cM(\cK^{3}_{12}) = \{25, 68, 126, 127, 136, 137, 347, 348, 457, 458\}$}
\begin{itemize}
  \item The symmetric group is \begin{align*} \{[1, 2, 3, 4, 5, 6, 7, 8], [1, 6, 7, 4, 8, 2, 3, 5], [4, 5, 3, 1, 2, 8, 7, 6], [4, 8, 7, 1, 6, 5, 3, 2]\}. \end{align*}

  \item The fan-giving characteristic maps are
            $$\begin{gathered}
                \begin{bmatrix}
                    x_0 & -1 & -1 & 0\\
                    -1 & 0 & 1 & 1\\
                    x_1 & -1 & -2 & -1\\
                    0 & 0 & -1 & -1
                \end{bmatrix},
                \begin{bmatrix}
                    0 & -1 & -1 & 0\\
                    -1 & x - 1 & 2x - 1 & x\\
                    0 & -1 & -2 & -1\\
                    0 & 0 & -1 & -1
                \end{bmatrix},
                \begin{bmatrix}
                    0 & -1 & x_0 - 1 & x_0\\
                    -1 & -1 & x_1 - 1 & x_1\\
                    1 & 0 & -1 & -1\\
                    0 & 0 & -1 & -1
                \end{bmatrix}.
            \end{gathered}$$
\end{itemize}
\subsection{$\cM(\cK^{3}_{13}) = \{25, 38, 57, 58, 68, 126, 347, 1247, 1346\}$}
\begin{itemize}
  \item The symmetric group is $$ \{[1, 2, 3, 4, 5, 6, 7, 8], [1, 6, 7, 4, 8, 2, 3, 5], [4, 3, 2, 1, 8, 7, 6, 5], [4, 7, 6, 1, 5, 3, 2, 8]\}.$$

  \item The fan-giving characteristic maps are
            $$\begin{gathered}
              \begin{bmatrix}
                    x_0 & -1 & 0 & 1\\
                    -1 & 0 & 1 & 2\\
                    x_1 & -1 & -1 & -1\\
                    x_2 & -1 & -1 & 0
                \end{bmatrix},
              \begin{bmatrix}
                    0 & -1 & 0 & 1\\
                    -1 & x - 2 & x - 1 & x\\
                    0 & -1 & -1 & -1\\
                    0 & -1 & -1 & 0
                \end{bmatrix},
              \begin{bmatrix}
                    0 & -1 & x_0 - 1 & x_0\\
                    -1 & -1 & x_1 - 1 & x_1\\
                    1 & 0 & -1 & -1\\
                    1 & 0 & -1 & 0
                \end{bmatrix}.
            \end{gathered}$$
\end{itemize}
\subsection{$\cM(\cK^{3}_{14}) = \{26, 58, 125, 127, 137, 157, 346, 347, 348, 468\}$}
\begin{itemize}
  \item The symmetric group is $$\{[1, 2, 3, 4, 5, 6, 7, 8], [1, 5, 3, 4, 2, 8, 7, 6], [4, 6, 7, 1, 8, 2, 3, 5], [4, 8, 7, 1, 6, 5, 3, 2]\}.$$

  \item The fan-giving characteristic map is $$\begin{gathered} \begin{bmatrix}
                    -1 & -1 & -1 & 0\\
                    -1 & -1 & 0 & 1\\
                    0 & -1 & -1 & -1\\
                    1 & 0 & 0 & -1
                \end{bmatrix}. \end{gathered}$$
\end{itemize}
\subsection{$\cM(\cK^{4}_{15}) = \{27, 146, 149, 167, 259, 358, 368, 678, 3459\}$}
\begin{itemize}
  \item The symmetric group is $$\{[1, 2, 3, 4, 5, 6, 7, 8, 9], [8, 2, 4, 3, 9, 6, 7, 1, 5]\}.$$

  \item The fan-giving characteristic maps are
            $$\begin{gathered}
              \begin{bmatrix}
                    -1 & -1 & 0 & -1\\
                    0 & -1 & -1 & -1\\
                    -1 & 0 & -1 & -1\\
                    -1 & 0 & 0 & -1\\
                    0 & 0 & -1 & -1
                \end{bmatrix},
              \begin{bmatrix}
                    -1 & 0 & x_0 & -1\\
                    1 & -1 & 0 & 0\\
                    0 & 1 & -1 & 0\\
                    -1 & 1 & x_1 & -1\\
                    1 & 0 & -1 & 0
                \end{bmatrix},
              \begin{bmatrix}
                    -1 & 0 & 1 & 0\\
                    0 & -1 & 0 & -1\\
                    0 & 1 & -1 & 0\\
                    -1 & 0 & 0 & -1\\
                    0 & 0 & -1 & -1
                \end{bmatrix}.
            \end{gathered}$$
\end{itemize}
\subsection{$\cM(\cK^{4}_{16}) = \{27, 79, 126, 349, 357, 358, 469, 1258, 1468\}$}
\begin{itemize}
  \item The symmetric group is trivial.

  \item The fan-giving characteristic maps are
            $$\begin{gathered}
            \begin{bmatrix}
                    -1 & -1 & -1 & x_0\\
                    -1 & -2 & -1 & x_1\\
                    1 & 1 & 0 & -1\\
                    0 & 1 & 0 & -1\\
                    0 & -1 & -1 & x_2
                \end{bmatrix},
            \begin{bmatrix}
                    -1 & x_0 - 1 & x_0 - 1 & x_0\\
                    -1 & x_1 - 2 & x_1 - 1 & x_1\\
                    0 & -1 & -1 & -1\\
                    0 & 1 & 0 & -1\\
                    0 & -1 & -1 & 0
                \end{bmatrix},
            \begin{bmatrix}
                    -1 & 0 & 0 & 1\\
                    -1 & 0 & 1 & 2\\
                    1 & -1 & -1 & -1\\
                    0 & 0 & -1 & -1\\
                    0 & -1 & -1 & 0
                \end{bmatrix},\\
            \begin{bmatrix}
                    -1 & -x_0 - 1 & -x_0 - 1 & 1\\
                    -1 & -x_0 - 2 & -x_0 - 1 & 1\\
                    1 & x_0 + 1 & x_0 & -1\\
                    0 & x_1 + 1 & x_1 & -1\\
                    0 & -1 & -1 & 0
                \end{bmatrix},
            \begin{bmatrix}
                    -1 & -1 & -1 & 0\\
                    -1 & -2 & -1 & 0\\
                    x_0 + 1 & 2x_0 + 1 & x_0 & -1\\
                    x_1 & 2x_1 + 1 & x_1 & -1\\
                    0 & -1 & -1 & 0
                \end{bmatrix},
            \begin{bmatrix}
                    -1 & 0 & 0 & 1\\
                    -1 & -1 & 0 & 1\\
                    x_0 & x_0 - 1 & -1 & -1\\
                    0 & 1 & 0 & -1\\
                    x_1 + 1 & x_1 & -1 & 0
                \end{bmatrix}.
            \end{gathered}$$

\end{itemize}
\subsection{$\cM(\cK^{4}_{17}) = \{27, 168, 259, 348, 349, 378, 379, 678, 1256, 1456, 1459\}$}
\begin{itemize}
  \item The symmetric group is trivial

  \item The fan-giving characteristic maps are
            $$\begin{gathered}
            \begin{bmatrix}
                    -1 & 1 & 0 & 0\\
                    0 & -1 & 2 & 1\\
                    0 & 0 & -1 & -1\\
                    -1 & 1 & -1 & -1\\
                    -1 & 0 & 1 & 0
                \end{bmatrix},
            \begin{bmatrix}
                    -1 & 0 & 0 & -1\\
                    -2 & -1 & 2 & -1\\
                    1 & 0 & -1 & 0\\
                    1 & 1 & -1 & 0\\
                    -1 & 0 & 1 & -1
                \end{bmatrix}.
            \end{gathered}$$
\end{itemize}
\subsection{$\cM(\cK^{4}_{18}) = \{27, 126, 168, 259, 348, 378, 379, 678, 1456, 1459, 3459\}$}
\begin{itemize}
  \item The symmetric group is trivial.

  \item The fan-giving characteristic maps are
            $$\begin{gathered}
            \begin{bmatrix}
                    -1 & 1 & 0 & 0\\
                    0 & -1 & 2 & 1\\
                    0 & 0 & -1 & -1\\
                    -1 & 1 & -1 & -1\\
                    -1 & 0 & 0 & -1
                \end{bmatrix}.
            \end{gathered}$$
\end{itemize}
\subsection{$\cM(\cK^{4}_{19}) = \{27, 37, 79, 126, 128, 358, 469, 3459, 14568\}$}
\begin{itemize}
  \item The symmetric group is \begin{align*} \{[1, 2, 3, 4, 5, 6, 7, 8, 9], [1, 2, 9, 5, 4, 8, 7, 6, 3]\}. \end{align*}

  \item The fan-giving characteristic maps are
            $$\begin{gathered}
            \begin{bmatrix}
                    -1 & -1 & -1 & x_0\\
                    -1 & -2 & -1 & x_1\\
                    1 & 1 & 0 & -1\\
                    0 & 1 & 0 & -1\\
                    1 & 2 & 0 & -1
                \end{bmatrix},
            \begin{bmatrix}
                    -1 & -x_0 - 1 & -x_0 - 1 & 1\\
                    -1 & -x_0 - 2 & -x_0 - 1 & 1\\
                    1 & x_0 + 1 & x_0 & -1\\
                    0 & x_1 + 1 & x_1 & -1\\
                    1 & x_0 + 2 & x_0 & -1
                \end{bmatrix},
            \begin{bmatrix}
                    -1 & -1 & -1 & 0\\
                    -1 & -2 & -1 & 0\\
                    x_0 + 1 & 2x_0 + 1 & x_0 & -1\\
                    x_1 & 2x_1 + 1 & x_1 & -1\\
                    x_2 + 1 & 2x_2 + 2 & x_2 & -1
                \end{bmatrix}.
            \end{gathered}$$
\end{itemize}
\subsection{$\cM(\cK^{4}_{20}) = \{39, 68, 256, 348, 569, 579, 1247, 1248, 1257, 1347\}$}
\begin{itemize}
  \item The symmetric group is \begin{align*} \{[1, 2, 3, 4, 5, 6, 7, 8, 9], [1, 7, 8, 4, 5, 9, 2, 3, 6]\}. \end{align*}

  \item The fan-giving characteristic maps are
            $$\begin{gathered}
            \begin{bmatrix}
                    -1 & -1 & 0 & 1\\
                    -1 & 0 & 1 & 2\\
                    0 & -1 & -1 & -1\\
                    0 & -1 & -1 & 0\\
                    -1 & 0 & 1 & 1
                \end{bmatrix},
            \begin{bmatrix}
                    0 & -1 & -1 & 0\\
                    -1 & -1 & 0 & 1\\
                    1 & 0 & -1 & -1\\
                    1 & 0 & -1 & 0\\
                    -1 & -1 & 0 & 0
                \end{bmatrix},
            \begin{bmatrix}
                    0 & -1 & 0 & 1\\
                    -1 & -1 & 0 & 1\\
                    1 & 0 & -1 & -1\\
                    0 & -1 & -1 & 0\\
                    -1 & 0 & 1 & 0
                \end{bmatrix}.
            \end{gathered}$$
\end{itemize}
\subsection{$\cM(\cK^{4}_{21}) = \{38, 79, 127, 137, 349, 469, 1256, 1258, 2456, 4568\}$}
\begin{itemize}
  \item The symmetric group is \begin{align*} \{[1, 2, 3, 4, 5, 6, 7, 8, 9], [4, 6, 3, 1, 5, 2, 9, 8, 7]\}. \end{align*}

  \item The fan-giving characteristic maps are
            $$\begin{gathered}
            \begin{bmatrix}
                    -1 & -1 & 0 & 1\\
                    -1 & 0 & 1 & 2\\
                    0 & -1 & -1 & -1\\
                    0 & 1 & 0 & -1\\
                    -1 & 0 & 0 & 1
                \end{bmatrix}.
            \end{gathered}$$
\end{itemize}
\subsection{$\cM(\cK^{4}_{22}) = \{79, 126, 127, 246, 349, 358, 468, 469, 1258, 1357\}$}
\begin{itemize}
  \item The symmetric group is \begin{align*} \{[1, 2, 3, 4, 5, 6, 7, 8, 9]\}. \end{align*}

  \item The fan-giving characteristic maps are
            $$\begin{gathered}
            \begin{bmatrix}
                    -1 & -1 & 0 & 1\\
                    -1 & 0 & 1 & 2\\
                    0 & -1 & -1 & -1\\
                    0 & 1 & 0 & -1\\
                    0 & -1 & -1 & 0
                \end{bmatrix},
            \begin{bmatrix}
                    0 & -1 & 0 & 1\\
                    -1 & -2 & -1 & 0\\
                    0 & -1 & -1 & -1\\
                    -1 & -1 & -1 & -1\\
                    0 & -1 & -1 & 0
                \end{bmatrix}.
            \end{gathered}$$
\end{itemize}
\subsection{$\cM(\cK^{4}_{23}) = \{27, 37, 38, 128, 349, 579, 1256, 1468, 4569\}$}
\begin{itemize}
  \item The symmetric group is \begin{align*} \{[1, 2, 3, 4, 5, 6, 7, 8, 9], [1, 8, 7, 5, 4, 6, 3, 2, 9]\}. \end{align*}

  \item The fan-giving characteristic maps are
            $$\begin{gathered}
            \begin{bmatrix}
                    -1 & -1 & -1 & x_0\\
                    -1 & -2 & -1 & x_1\\
                    1 & 1 & 0 & -1\\
                    0 & 1 & 0 & -1\\
                    -1 & -1 & 0 & 0
                \end{bmatrix},
            \begin{bmatrix}
                    -1 & x_0 - 1 & x_0 - 1 & x_0\\
                    -1 & x_1 - 2 & x_1 - 1 & x_1\\
                    0 & -1 & -1 & -1\\
                    0 & 1 & 0 & -1\\
                    -1 & x_2 - 1 & x_2 & x_2
                \end{bmatrix},
            \begin{bmatrix}
                    -1 & -1 & -x_0 - 1 & 1\\
                    -1 & -2 & -2x_0 - 1 & 2\\
                    1 & 1 & x_0 & -1\\
                    0 & 1 & x_0 & -1\\
                    -1 & -1 & x_1 & 0
                \end{bmatrix}, \\
            \begin{bmatrix}
                    -1 & 0 & 0 & 1\\
                    -1 & -1 & x - 1 & x\\
                    1 & 0 & -1 & -1\\
                    0 & 0 & -1 & -1\\
                    -1 & 0 & 1 & 0
                \end{bmatrix},
            \begin{bmatrix}
                    -1 & -x_0 - 1 & -x_0 - 1 & 1\\
                    -1 & -x_0 - 2 & -x_0 - 1 & 1\\
                    1 & x_0 + 1 & x_0 & -1\\
                    0 & x_1 + 1 & x_1 & -1\\
                    -1 & x_2 - 1 & x_2 & 0
                \end{bmatrix}.
            \end{gathered}$$
\end{itemize}
\subsection{$\cM(\cK^{4}_{24}) = \{127, 128, 137, 138, 256, 348, 349, 379, 468, 579, 4569\}$}
\begin{itemize}
  \item The symmetric group is \begin{align*} \{[1, 2, 3, 4, 5, 6, 7, 8, 9], [1, 2, 3, 9, 6, 5, 8, 7, 4]\}. \end{align*}

  \item The fan-giving characteristic maps are
            $$\begin{gathered}
            \begin{bmatrix}
                    0 & -1 & -1 & x_0\\
                    -1 & -2 & -1 & x_1\\
                    1 & 1 & 0 & -1\\
                    0 & 1 & 0 & -1\\
                    -1 & -1 & 0 & 0
                \end{bmatrix},
            \begin{bmatrix}
                    0 & -1 & -x_0 - 1 & 1\\
                    -1 & -2 & -2x_0 - 1 & 2\\
                    1 & 1 & x_0 & -1\\
                    0 & 1 & x_0 & -1\\
                    -1 & -1 & x_1 & 0
                \end{bmatrix},
            \begin{bmatrix}
                    0 & -1 & -1 & 0\\
                    -1 & -x_0 - 2 & -x_0 - 1 & 1\\
                    1 & x_0 + 1 & x_0 & -1\\
                    0 & x_1 + 1 & x_1 & -1\\
                    -1 & x_2 - 1 & x_2 & 0
                \end{bmatrix}.
            \end{gathered}$$
\end{itemize}
\subsection{$\cM(\cK^{4}_{25}) = \{79, 126, 257, 348, 349, 357, 358, 469, 1258, 1468\}$}
\begin{itemize}
  \item The symmetric group is \begin{align*} \{[1, 2, 3, 4, 5, 6, 7, 8, 9], [1, 6, 3, 5, 4, 2, 9, 8, 7]\}. \end{align*}

  \item The fan-giving characteristic maps are
            $$\begin{gathered}
            \begin{bmatrix}
                    -1 & -1 & -1 & x_0\\
                    -1 & -2 & -1 & x_1\\
                    1 & 1 & 0 & -1\\
                    0 & 1 & 0 & -1\\
                    0 & -1 & -1 & x_2
                \end{bmatrix},
            \begin{bmatrix}
                    -1 & -1 & -x - 1 & 1\\
                    -1 & -2 & -2x - 1 & 2\\
                    1 & 1 & x & -1\\
                    0 & 1 & x & -1\\
                    0 & -1 & -x - 1 & 1
                \end{bmatrix},
            \begin{bmatrix}
                    -1 & 0 & 0 & 1\\
                    -1 & -1 & x_0 - 1 & x_0\\
                    1 & 0 & -1 & -1\\
                    0 & 0 & -1 & -1\\
                    0 & -1 & x_1 - 1 & x_1
                \end{bmatrix}.
            \end{gathered}$$
\end{itemize}
\subsection{$\cM(\cK^{4}_{26}) = \{69, 128, 138, 168, 257, 347, 348, 349, 1256, 4579\}$}
\begin{itemize}
  \item The symmetric group is \begin{align*} \{[1, 2, 3, 4, 5, 6, 7, 8, 9], [4, 7, 8, 1, 5, 9, 2, 3, 6]\}. \end{align*}

  \item The fan-giving characteristic maps are
            $$\begin{gathered}
            \begin{bmatrix}
                    -1 & -1 & -1 & 0\\
                    -1 & -1 & 0 & 1\\
                    0 & -1 & -1 & -1\\
                    1 & 0 & 0 & -1\\
                    -1 & -1 & 0 & 0
                \end{bmatrix}.
            \end{gathered}$$
\end{itemize}
\subsection{$\cM(\cK^{4}_{27}) = \{126, 146, 149, 168, 257, 259, 348, 349, 378, 678, 3579\}$}
\begin{itemize}
  \item The symmetric group is \begin{align*} \{[1, 2, 3, 4, 5, 6, 7, 8, 9], [6, 2, 3, 8, 5, 1, 9, 4, 7]\}. \end{align*}

  \item The fan-giving characteristic maps are
            $$\begin{gathered}
            \begin{bmatrix}
                    -1 & 1 & 0 & 0\\
                    0 & -1 & 2 & 1\\
                    0 & 0 & -1 & -1\\
                    -1 & 1 & -1 & -1\\
                    0 & -1 & 1 & 0
                \end{bmatrix}.
            \end{gathered}$$
\end{itemize}
\subsection{$\cM(\cK^{4}_{28}) = \{127, 128, 137, 256, 268, 348, 349, 468, 569, 579\}$}
\begin{itemize}
  \item The symmetric group is \begin{align*} \{[1, 2, 3, 4, 5, 6, 7, 8, 9], [4, 8, 9, 5, 1, 2, 3, 6, 7], [5, 6, 7, 1, 4, 8, 9, 2, 3]\}. \end{align*}

  \item The fan-giving characteristic maps are
            $$\begin{gathered}
            \begin{bmatrix}
                    1 & -1 & -1 & 0\\
                    -1 & 0 & 1 & 1\\
                    2 & -1 & -2 & -1\\
                    1 & -1 & -2 & -1\\
                    -1 & 0 & 1 & 0
                \end{bmatrix},
            \begin{bmatrix}
                    0 & -1 & -1 & 0\\
                    -1 & -1 & 0 & 1\\
                    1 & 0 & -1 & -1\\
                    0 & 0 & -1 & -1\\
                    -1 & -1 & 0 & 0
                \end{bmatrix}.
            \end{gathered}$$
\end{itemize}
\subsection{$\cM(\cK^{4}_{29}) = \{27, 69, 126, 168, 349, 479, 1258, 1358, 3457, 3458\}$}
\begin{itemize}
  \item The symmetric group is \begin{align*} \{[1, 2, 3, 4, 5, 6, 7, 8, 9], [4, 7, 8, 1, 5, 9, 2, 3, 6]\}. \end{align*}

  \item The fan-giving characteristic maps are
            $$\begin{gathered}
            \begin{bmatrix}
                    -1 & -1 & -1 & 0\\
                    -1 & -1 & 0 & 1\\
                    0 & -1 & -1 & -1\\
                    1 & 0 & 0 & -1\\
                    0 & -1 & -1 & 0
                \end{bmatrix}.
            \end{gathered}$$
\end{itemize}
\subsection{$\cM(\cK^{4}_{30}) = \{27, 126, 128, 138, 347, 348, 479, 569, 1568, 3459\}$}
\begin{itemize}
  \item The symmetric group is \begin{align*} \{[1, 2, 3, 4, 5, 6, 7, 8, 9], [4, 7, 8, 1, 5, 9, 2, 3, 6]\}. \end{align*}

  \item The fan-giving characteristic maps are
            $$\begin{gathered}
            \begin{bmatrix}
                    -1 & -1 & -1 & 0\\
                    -1 & -1 & 0 & 1\\
                    0 & -1 & -1 & -1\\
                    1 & 0 & 0 & -1\\
                    0 & 1 & 0 & -1
                \end{bmatrix}.
            \end{gathered}$$
\end{itemize}
\subsection{$\cM(\cK^{5}_{31}) = \{79, 257, 349, 36A, 57A, 1248, 1249, 1258, 568A, 13468\}$}
\begin{itemize}
  \item The symmetric group is trivial.

  \item The fan-giving characteristic maps are
            $$\begin{gathered}
            \begin{bmatrix}
                    0 & -1 & -1 & 0\\
                    -1 & -1 & -1 & 1\\
                    2 & 1 & 0 & -1\\
                    1 & 0 & -1 & 0\\
                    -1 & -1 & 0 & 0\\
                    1 & 0 & 0 & -1
                \end{bmatrix},
            \begin{bmatrix}
                    0 & -1 & -1 & 0\\
                    -1 & -1 & 0 & 1\\
                    1 & 0 & -1 & -1\\
                    1 & 0 & -1 & 0\\
                    -1 & -1 & 0 & 0\\
                    1 & 0 & 0 & -1
                \end{bmatrix},
            \begin{bmatrix}
                    0 & -1 & 0 & 1\\
                    -1 & -1 & 0 & 1\\
                    1 & 0 & -1 & -1\\
                    0 & -1 & -1 & 0\\
                    -1 & 0 & 1 & 0\\
                    1 & 0 & 0 & -1
                \end{bmatrix}.
            \end{gathered}$$
\end{itemize}
\subsection{$\cM(\cK^{5}_{32}) = \{8A, 128, 34A, 359, 47A, 1267, 1358, 2467, 4679, 12569\}$}
\begin{itemize}
  \item The symmetric group is trivial.

  \item The fan-giving characteristic maps are
            $$\begin{gathered}
            \begin{bmatrix}
                    -1 & -1 & 0 & 1\\
                    -1 & 0 & 1 & 2\\
                    0 & -1 & -1 & -1\\
                    0 & 1 & 0 & -1\\
                    0 & -1 & -1 & 0\\
                    -1 & 0 & 0 & 1
                \end{bmatrix},
            \begin{bmatrix}
                    0 & -1 & 0 & 1\\
                    -1 & -2 & -1 & 0\\
                    0 & -1 & -1 & -1\\
                    -1 & -1 & -1 & -1\\
                    0 & -1 & -1 & 0\\
                    -1 & -1 & -1 & 0
                \end{bmatrix}.
            \end{gathered}$$
\end{itemize}
\subsection{$\cM(\cK^{5}_{33}) = \{28, 8A, 127, 368, 47A, 1269, 345A, 3569, 14579\}$}
\begin{itemize}
  \item The symmetric group is \begin{align*} \{[1, 2, 3, 4, 5, 6, 7, 8, 9, A], [4, A, 6, 1, 9, 3, 7, 8, 5, 2]\}. \end{align*}

  \item The fan-giving characteristic maps are
            $$\begin{gathered}
            \begin{bmatrix}
                    -1 & -1 & -1 & x_0\\
                    -1 & -2 & -1 & x_1\\
                    1 & 1 & 0 & -1\\
                    0 & 1 & 0 & -1\\
                    1 & 2 & 0 & -1\\
                    0 & -1 & -1 & x_2
                \end{bmatrix},
            \begin{bmatrix}
                    -1 & x_0 - 1 & x_0 - 1 & x_0\\
                    -1 & x_1 - 2 & x_1 - 1 & x_1\\
                    0 & -1 & -1 & -1\\
                    0 & 1 & 0 & -1\\
                    0 & 0 & -1 & -1\\
                    0 & -1 & -1 & 0
                \end{bmatrix},
            \begin{bmatrix}
                    -1 & 0 & 0 & 1\\
                    -1 & -1 & 0 & 1\\
                    x_0 & x_0 - 1 & -1 & -1\\
                    0 & 1 & 0 & -1\\
                    x_1 & x_1 & -1 & -1\\
                    x_2 & x_2 - 1 & -1 & 0
                \end{bmatrix}.
            \end{gathered}$$
\end{itemize}
\subsection{$\cM(\cK^{5}_{34}) = \{8A, 127, 128, 247, 47A, 1259, 1358, 346A, 3569, 4679\}$}
\begin{itemize}
  \item The symmetric group is \begin{align*} \{[1, 2, 3, 4, 5, 6, 7, 8, 9, A], [4, 7, 3, 1, 6, 5, 2, A, 9, 8]\}. \end{align*}

  \item The fan-giving characteristic maps are
            $$\begin{gathered}
            \begin{bmatrix}
                    -1 & -1 & 0 & 1\\
                    -1 & 0 & 1 & 2\\
                    0 & -1 & -1 & -1\\
                    0 & 1 & 0 & -1\\
                    0 & -1 & -1 & 0\\
                    0 & 0 & -1 & -1
                \end{bmatrix}.
            \end{gathered}$$
\end{itemize}
\subsection{$\cM(\cK^{5}_{35}) = \{258, 349, 34A, 358, 359, 38A, 68A, 1259, 1267, 1479, 467A\}$}
\begin{itemize}
  \item The symmetric group is \begin{align*} \{[1, 2, 3, 4, 5, 6, 7, 8, 9, A], [7, 6, 3, 9, A, 2, 1, 8, 4, 5]\}. \end{align*}

  \item The fan-giving characteristic maps are
            $$\begin{gathered}
            \begin{bmatrix}
                    -1 & -1 & -1 & x_0\\
                    -1 & -2 & -1 & x_1\\
                    1 & 1 & 0 & -1\\
                    0 & 1 & 0 & -1\\
                    0 & -1 & -1 & x_2\\
                    -1 & -1 & 0 & 0
                \end{bmatrix},
            \begin{bmatrix}
                    -1 & -1 & -x_0 - 1 & 1\\
                    -1 & -2 & -2x_0 - 1 & 2\\
                    1 & 1 & x_0 & -1\\
                    0 & 1 & x_0 & -1\\
                    0 & -1 & -x_0 - 1 & 1\\
                    -1 & -1 & x_1 & 0
                \end{bmatrix},
            \begin{bmatrix}
                    -1 & x_0 & 0 & 1\\
                    -1 & x_1 & 1 & 2\\
                    1 & -x_1 - 1 & -1 & -1\\
                    0 & x_2 & -1 & -1\\
                    0 & -1 & 0 & 1\\
                    -1 & x_1 + 1 & 1 & 0
                \end{bmatrix}.
            \end{gathered}$$
\end{itemize}
\subsection{$\cM(\cK^{5}_{36}) = \{179, 258, 349, 34A, 389, 789, 1267, 1467, 146A, 256A, 358A\}$}
\begin{itemize}
  \item The symmetric group is trivial.

  \item The fan-giving characteristic maps are
            $$\begin{gathered}
            \begin{bmatrix}
                    -1 & 1 & 0 & 0\\
                    0 & -1 & 2 & 1\\
                    0 & 0 & -1 & -1\\
                    -1 & 1 & -1 & -1\\
                    0 & -1 & 1 & 0\\
                    -1 & 0 & 1 & 0
                \end{bmatrix},
            \begin{bmatrix}
                    -1 & 0 & 0 & -1\\
                    -2 & -1 & 2 & -1\\
                    1 & 0 & -1 & 0\\
                    1 & 1 & -1 & 0\\
                    -1 & -1 & 1 & -1\\
                    -1 & 0 & 1 & -1
                \end{bmatrix}.
            \end{gathered}$$
\end{itemize}
\subsection{$\cM(\cK^{5}_{37}) = \{128, 129, 257, 279, 349, 479, 57A, 1368, 346A, 568A\}$}
\begin{itemize}
  \item The symmetric group is \begin{align*} \{[1, 2, 3, 4, 5, 6, 7, 8, 9, A], [4, 9, A, 5, 1, 6, 2, 3, 7, 8], [5, 7, 8, 1, 4, 6, 9, A, 2, 3]\}. \end{align*}

  \item The fan-giving characteristic maps are
            $$\begin{gathered}
            \begin{bmatrix}
                    1 & -1 & -1 & 0\\
                    -1 & 0 & 1 & 1\\
                    2 & -1 & -2 & -1\\
                    1 & -1 & -2 & -1\\
                    -1 & 0 & 1 & 0\\
                    1 & -1 & -1 & -1
                \end{bmatrix},
            \begin{bmatrix}
                    0 & -1 & -1 & 0\\
                    -1 & -1 & 0 & 1\\
                    1 & 0 & -1 & -1\\
                    0 & 0 & -1 & -1\\
                    -1 & -1 & 0 & 0\\
                    1 & 0 & 0 & -1
                \end{bmatrix}.
            \end{gathered}$$
\end{itemize}
\subsection{$\cM(\cK^{5}_{38}) = \{7A, 127, 179, 268, 34A, 48A, 1359, 3459, 12569, 34568\}$}
\begin{itemize}
  \item The symmetric group is \begin{align*} \{[1, 2, 3, 4, 5, 6, 7, 8, 9, A], [4, 8, 9, 1, 5, 6, A, 2, 3, 7]\}. \end{align*}

  \item The fan-giving characteristic maps are
            $$\begin{gathered}
            \begin{bmatrix}
                    -1 & -1 & -1 & 0\\
                    -1 & -1 & 0 & 1\\
                    0 & -1 & -1 & -1\\
                    1 & 0 & 0 & -1\\
                    0 & -1 & -1 & 0\\
                    0 & -1 & 0 & 1
                \end{bmatrix}.
            \end{gathered}$$
\end{itemize}
\subsection{$\cM(\cK^{5}_{39}) = \{25A, 268, 349, 34A, 38A, 1257, 1457, 145A, 1479, 1679, 3689, 6789\}$}
\begin{itemize}
  \item The symmetric group is trivial.

  \item The fan-giving characteristic maps are
            $$\begin{gathered}
            \begin{bmatrix}
                    -1 & 0 & 0 & -1\\
                    -2 & -1 & 2 & -1\\
                    1 & 0 & -1 & 0\\
                    1 & 1 & -1 & 0\\
                    -1 & 0 & 1 & -1\\
                    -1 & -1 & 0 & -1
                \end{bmatrix}.
            \end{gathered}$$
\end{itemize}
\subsection{$\cM(\cK^{5}_{40}) = \{7A, 179, 268, 34A, 1259, 1267, 1359, 3458, 3459, 468A\}$}
\begin{itemize}
  \item The symmetric group is \begin{align*} \{[1, 2, 3, 4, 5, 6, 7, 8, 9, A], [4, 8, 9, 1, 5, 6, A, 2, 3, 7]\}. \end{align*}

  \item The fan-giving characteristic maps are
            $$\begin{gathered}
            \begin{bmatrix}
                    -1 & -1 & -1 & 0\\
                    -1 & -1 & 0 & 1\\
                    0 & -1 & -1 & -1\\
                    1 & 0 & 0 & -1\\
                    0 & -1 & -1 & 0\\
                    -1 & -1 & 0 & 0
                \end{bmatrix}.
            \end{gathered}$$
\end{itemize}
\subsection{$\cM(\cK^{5}_{41}) = \{128, 129, 138, 257, 279, 58A, 1349, 3469, 346A, 4679, 567A\}$}
\begin{itemize}
  \item The symmetric group is trivial.

  \item The fan-giving characteristic maps are
            $$\begin{gathered}
            \begin{bmatrix}
                    1 & -1 & -1 & 0\\
                    -1 & 0 & 1 & 1\\
                    2 & -1 & -2 & -1\\
                    1 & -1 & -2 & -1\\
                    -1 & 0 & 1 & 0\\
                    0 & 0 & -1 & -1
                \end{bmatrix},
            \begin{bmatrix}
                    0 & -1 & -1 & 0\\
                    -1 & -1 & 0 & 1\\
                    1 & 0 & -1 & -1\\
                    0 & 0 & -1 & -1\\
                    -1 & -1 & 0 & 0\\
                    0 & 1 & 0 & -1
                \end{bmatrix}.
            \end{gathered}$$
\end{itemize}
\subsection{$\cM(\cK^{5}_{42}) = \{28, 127, 48A, 67A, 1259, 1359, 1679, 3458, 3459, 346A\}$}
\begin{itemize}
  \item The symmetric group is \begin{align*} \{[1, 2, 3, 4, 5, 6, 7, 8, 9, A], [4, 8, 9, 1, 5, 6, A, 2, 3, 7]\}. \end{align*}

  \item The fan-giving characteristic maps are
            $$\begin{gathered}
            \begin{bmatrix}
                    -1 & -1 & -1 & 0\\
                    -1 & -1 & 0 & 1\\
                    0 & -1 & -1 & -1\\
                    1 & 0 & 0 & -1\\
                    0 & -1 & -1 & 0\\
                    0 & 1 & 0 & -1
                \end{bmatrix}.
            \end{gathered}$$
\end{itemize}
\subsection{$\cM(\cK^{5}_{43}) = \{28, 179, 349, 389, 38A, 789, 1257, 256A, 346A, 14567, 1456A\}$}
\begin{itemize}
  \item The symmetric group is trivial.

  \item The fan-giving characteristic maps are
            $$\begin{gathered}
            \begin{bmatrix}
                    -1 & 1 & 0 & 0\\
                    0 & -1 & 2 & 1\\
                    0 & 0 & -1 & -1\\
                    -1 & 1 & -1 & -1\\
                    -1 & 0 & 1 & 0\\
                    -1 & 0 & 0 & -1
                \end{bmatrix}.
            \end{gathered}$$
\end{itemize}
\subsection{$\cM(\cK^{5}_{44}) = \{25A, 268, 349, 34A, 389, 38A, 1457, 145A, 1479, 1679, 6789, 12567\}$}
\begin{itemize}
  \item The symmetric group is \begin{align*} \{[1, 2, 3, 4, 5, 6, 7, 8, 9, A], [7, 2, 3, 9, 6, 5, 1, A, 4, 8]\}. \end{align*}

  \item The fan-giving characteristic maps are
            $$\begin{gathered}
            \begin{bmatrix}
                    -1 & 1 & 0 & 0\\
                    0 & -1 & 2 & 1\\
                    0 & 0 & -1 & -1\\
                    -1 & 1 & -1 & -1\\
                    -1 & 0 & 1 & 0\\
                    0 & -1 & 1 & 1
                \end{bmatrix}.
            \end{gathered}$$
\end{itemize}
\subsection{$\cM(\cK^{5}_{45}) = \{179, 258, 268, 34A, 67A, 1259, 1267, 1359, 3458, 3459, 468A\}$}
\begin{itemize}
  \item The symmetric group is \begin{align*} \{[1, 2, 3, 4, 5, 6, 7, 8, 9, A], [4, 8, 9, 1, 5, 6, A, 2, 3, 7]\}. \end{align*}

  \item The fan-giving characteristic maps are
            $$\begin{gathered}
            \begin{bmatrix}
                    -1 & -1 & -1 & 0\\
                    -1 & -1 & 0 & 1\\
                    0 & -1 & -1 & -1\\
                    1 & 0 & 0 & -1\\
                    0 & -1 & -1 & 0\\
                    -1 & -1 & 0 & 0
                \end{bmatrix}.
            \end{gathered}$$
\end{itemize}
\subsection{$\cM(\cK^{5}_{46}) = \{127, 147, 14A, 25A, 349, 34A, 1679, 2568, 358A, 3689, 6789\}$}
\begin{itemize}
  \item The symmetric group is trivial.

  \item The fan-giving characteristic maps are
            $$\begin{gathered}
            \begin{bmatrix}
                    -1 & 0 & 0 & -1\\
                    -2 & -1 & 2 & -1\\
                    1 & 0 & -1 & 0\\
                    1 & 1 & -1 & 0\\
                    -1 & -1 & 1 & -1\\
                    -1 & -1 & 0 & -1
                \end{bmatrix}.
            \end{gathered}$$
\end{itemize}
\subsection{$\cM(\cK^{5}_{47}) = \{257, 36A, 38A, 58A, 679, 1248, 1249, 1258, 1348, 1349, 3469, 567A\}$}
\begin{itemize}
  \item The symmetric group is \begin{align*} \{[1, 2, 3, 4, 5, 6, 7, 8, 9, A], [4, 9, 8, 1, 6, 5, 7, 3, 2, A]\}. \end{align*}

  \item The fan-giving characteristic maps are
            $$\begin{gathered}
            \begin{bmatrix}
                    0 & -1 & -1 & 0\\
                    -1 & -1 & 0 & 1\\
                    1 & 0 & -1 & -1\\
                    1 & 0 & -1 & 0\\
                    -1 & -1 & 0 & 0\\
                    0 & 1 & 0 & -1
                \end{bmatrix}.
            \end{gathered}$$
\end{itemize}
\subsection{$\cM(\cK^{5}_{48}) = \{28, 7A, 127, 48A, 1259, 1679, 3458, 346A, 13569, 34569\}$}
\begin{itemize}
  \item The symmetric group is \begin{align*} \{[1, 2, 3, 4, 5, 6, 7, 8, 9, A], [1, 7, 3, 4, 6, 5, 2, A, 9, 8], [4, 8, 9, 1, 5, 6, A, 2, 3, 7], [4, A, 9, 1, 6, 5, 8, 7, 3, 2]\}. \end{align*}

  \item The fan-giving characteristic maps are
            $$\begin{gathered}
            \begin{bmatrix}
                    -1 & -1 & -1 & 0\\
                    -1 & -1 & 0 & 1\\
                    0 & -1 & -1 & -1\\
                    1 & 0 & 0 & -1\\
                    0 & -1 & -1 & 0\\
                    0 & 0 & -1 & -1
                \end{bmatrix}.
            \end{gathered}$$
\end{itemize}
\subsection{$\cM(\cK^{6}_{49}) = \{34A, 34B, 39A, 1468, 146B, 148A, 178A, 256B, 2579, 359B, 789A, 12678\}$}
\begin{itemize}
  \item The symmetric group is \begin{align*} \{[1, 2, 3, 4, 5, 6, 7, 8, 9, A, B], [8, 2, 3, A, 5, 7, 6, 1, B, 4, 9]\}. \end{align*}

  \item The fan-giving characteristic maps are
            $$\begin{gathered}
            \begin{bmatrix}
                    -1 & 1 & 0 & 0\\
                    0 & -1 & 2 & 1\\
                    0 & 0 & -1 & -1\\
                    -1 & 1 & -1 & -1\\
                    0 & -1 & 1 & 0\\
                    -1 & 0 & 1 & 0\\
                    0 & -1 & 1 & 1
                \end{bmatrix}.
            \end{gathered}$$
\end{itemize}
\subsection{$\cM(\cK^{6}_{50}) = \{9B, 129, 48B, 1268, 1359, 2468, 347B, 357A, 1256A, 4678A\}$}
\begin{itemize}
  \item The symmetric group is \begin{align*} \{[1, 2, 3, 4, 5, 6, 7, 8, 9, A, B], [4, 8, 3, 1, 7, 6, 5, 2, B, A, 9]\}. \end{align*}

  \item The fan-giving characteristic maps are
            $$\begin{gathered}
            \begin{bmatrix}
                    -1 & -1 & 0 & 1\\
                    -1 & 0 & 1 & 2\\
                    0 & -1 & -1 & -1\\
                    0 & 1 & 0 & -1\\
                    0 & -1 & -1 & 0\\
                    -1 & 0 & 0 & 1\\
                    0 & 0 & -1 & -1
                \end{bmatrix}.
            \end{gathered}$$
\end{itemize}
\subsection{$\cM(\cK^{6}_{51}) = \{8B, 128, 269, 49B, 178A, 347B, 1256A, 1357A, 34569, 3457A\}$}
\begin{itemize}
  \item The symmetric group is \begin{align*} \{[1, 2, 3, 4, 5, 6, 7, 8, 9, A, B], [4, 9, A, 1, 5, 6, 7, B, 2, 3, 8]\}. \end{align*}

  \item The fan-giving characteristic maps are
            $$\begin{gathered}
            \begin{bmatrix}
                    -1 & -1 & -1 & 0\\
                    -1 & -1 & 0 & 1\\
                    0 & -1 & -1 & -1\\
                    1 & 0 & 0 & -1\\
                    0 & -1 & -1 & 0\\
                    0 & -1 & 0 & 1\\
                    0 & 0 & -1 & -1
                \end{bmatrix}.
            \end{gathered}$$
\end{itemize}
\subsection{$\cM(\cK^{6}_{52}) = \{12A, 28A, 1279, 1349, 134A, 1379, 2578, 346A, 346B, 468A, 568B, 579B\}$}
\begin{itemize}
  \item The symmetric group is trivial.

  \item The fan-giving characteristic maps are
            $$\begin{gathered}
            \begin{bmatrix}
                    1 & -1 & -1 & 0\\
                    -1 & 0 & 1 & 1\\
                    2 & -1 & -2 & -1\\
                    1 & -1 & -2 & -1\\
                    -1 & 0 & 1 & 0\\
                    0 & 0 & -1 & -1\\
                    -1 & 0 & 2 & 1
                \end{bmatrix}.
            \end{gathered}$$
\end{itemize}
\subsection{$\cM(\cK^{6}_{53}) = \{269, 279, 369, 39B, 79B, 126A, 1278, 345B, 356A, 478B, 1458A\}$}
\begin{itemize}
  \item The symmetric group is \begin{align*} \{&[1, 2, 3, 4, 5, 6, 7, 8, 9, A, B], [1, 2, B, 5, 4, 7, 6, A, 9, 8, 3], [4, B, 6, 1, A, 3, 7, 8, 9, 5, 2],\\ &[4, B, 2, A, 1, 7, 3, 5, 9, 8, 6], [5, 3, 7, 1, 8, B, 6, A, 9, 4, 2], [5, 3, 2, 8, 1, 6, B, 4, 9, A, 7],\\ &[8, 7, 6, 5, A, 2, B, 4, 9, 1, 3], [8, 7, 3, A, 5, B, 2, 1, 9, 4, 6], [A, 6, 7, 4, 8, 2, 3, 5, 9, 1, B],\\ &[A, 6, B, 8, 4, 3, 2, 1, 9, 5, 7]\}. \end{align*}

  \item The fan-giving characteristic maps are
            $$\begin{gathered}
            \begin{bmatrix}
                    -1 & -1 & -1 & x_0\\
                    -1 & -2 & -1 & x_1\\
                    1 & 1 & 0 & -1\\
                    0 & 1 & 0 & -1\\
                    1 & 2 & 0 & -1\\
                    0 & -1 & -1 & x_2\\
                    -1 & -1 & 0 & 0
                \end{bmatrix}.
            \end{gathered}$$
\end{itemize}
\subsection{$\cM(\cK^{6}_{54}) = \{129, 12A, 258, 28A, 134A, 1379, 346A, 468A, 568B, 579B, 3467B\}$}
\begin{itemize}
  \item The symmetric group is \begin{align*} \{[1, 2, 3, 4, 5, 6, 7, 8, 9, A, B], [8, 2, 6, 4, 9, 3, B, 1, 5, A, 7]\}. \end{align*}

  \item The fan-giving characteristic maps are
            $$\begin{gathered}
            \begin{bmatrix}
                    1 & -1 & -1 & 0\\
                    -1 & 0 & 1 & 1\\
                    2 & -1 & -2 & -1\\
                    1 & -1 & -2 & -1\\
                    -1 & 0 & 1 & 0\\
                    0 & 0 & -1 & -1\\
                    1 & -1 & -1 & -1
                \end{bmatrix}.
            \end{gathered}$$
\end{itemize}
\subsection{$\cM(\cK^{6}_{55}) = \{34B, 35A, 39B, 48B, 79B, 1279, 1359, 468A, 1256A, 12678, 24678\}$}
\begin{itemize}
  \item The symmetric group is trivial.

  \item The fan-giving characteristic maps are
            $$\begin{gathered}
            \begin{bmatrix}
                    0 & -1 & 0 & 1\\
                    -1 & -2 & -1 & 0\\
                    0 & -1 & -1 & -1\\
                    -1 & -1 & -1 & -1\\
                    0 & -1 & -1 & 0\\
                    -1 & -1 & -1 & 0\\
                    -1 & -1 & 0 & 0
                \end{bmatrix}.
            \end{gathered}$$
\end{itemize}
\subsection{$\cM(\cK^{6}_{56}) = \{25B, 34A, 34B, 39B, 1458, 145B, 148A, 2679, 369A, 12578, 1678A, 6789A\}$}
\begin{itemize}
  \item The symmetric group is trivial.

  \item The fan-giving characteristic maps are
            $$\begin{gathered}
            \begin{bmatrix}
                    -1 & 0 & 0 & -1\\
                    -2 & -1 & 2 & -1\\
                    1 & 0 & -1 & 0\\
                    1 & 1 & -1 & 0\\
                    -1 & 0 & 1 & -1\\
                    -1 & -1 & 0 & -1\\
                    -2 & -1 & 1 & -1
                \end{bmatrix}.
            \end{gathered}$$
\end{itemize}
\subsection{$\cM(\cK^{6}_{57}) = \{269, 68B, 78B, 125A, 1268, 128A, 178A, 3459, 347B, 469B, 1357A, 3457A\}$}
\begin{itemize}
  \item The symmetric group is trivial.

  \item The fan-giving characteristic maps are
            $$\begin{gathered}
            \begin{bmatrix}
                    -1 & -1 & -1 & 0\\
                    -1 & -1 & 0 & 1\\
                    0 & -1 & -1 & -1\\
                    1 & 0 & 0 & -1\\
                    0 & -1 & -1 & 0\\
                    -1 & -1 & 0 & 0\\
                    0 & 0 & -1 & -1
                \end{bmatrix}.
            \end{gathered}$$
\end{itemize}
\subsection{$\cM(\cK^{7}_{58}) = \{34B, 34C, 1459, 145C, 149B, 258C, 36AB, 38AC, 12579, 1679B, 2678A, 679AB\}$}
\begin{itemize}
  \item The symmetric group is trivial.

  \item The fan-giving characteristic maps are
            $$\begin{gathered}
            \begin{bmatrix}
                    -1 & 0 & 0 & -1\\
                    -2 & -1 & 2 & -1\\
                    1 & 0 & -1 & 0\\
                    1 & 1 & -1 & 0\\
                    -1 & 0 & 1 & -1\\
                    -1 & -1 & 0 & -1\\
                    -2 & -1 & 1 & -1\\
                    -1 & -1 & 1 & -1
                \end{bmatrix}.
            \end{gathered}$$
\end{itemize}

The implementation of the classification procedure developed in this paper is publicly available at the second author's Github repository:
\begin{center}
\url{https://github.com/Hyeontae1112/toric_manifolds_with_Picard_number_4}
\end{center}

\section{Applications}
\subsection{Upper bound for the number of minimal non-faces of fanlike spheres}
In the last section of~\cite{Batyrev1991}, after classifying toric manifolds with Picard number~$3$, Batyrev posed several interesting questions.
For a fanlike $(n-1)$-sphere $K$ with Picard number~$p$, the conjecture was that the number of minimal non-faces of $K$ is bounded in terms of $p$.
Specifically, there exists an integer sequence $\{\cN_p \}$ such that $|\cM(K)| \leq \cN_p$.
If this conjecture holds, a natural step is to determine the exact value of $\cN_p$ for each Picard number~$p$.
Batyrev explicitly raised the following:
\begin{question} \cite[Question~7.2]{Batyrev1991}
  Is there a fanlike $(n-1)$-sphere $K$ with Picard number~$p$ satisfying the following inequality?
  \begin{equation} \label{Batyrev inequality}
    |\cM(K)| > \frac{(p-1)(p+2)}{2}.
  \end{equation}
\end{question}

The boundedness has already been positively resolved.
Define $\cN(n,p)$ to be the maximal number of minimal non-faces among fanlike $(n-1)$-spheres with Picard number~$p$, and set
$$
    \cN_p = \max_{n \geq 1} \cN(n,p).
$$
Since the wedge operation neither changes the Picard number nor the number of minimal non-faces \cite{CP_wedge_2}, it should be noted that
\begin{equation} \label{eqn:N(n+1,p)}
    \cN(n+1,p) \geq \cN(n,p).
\end{equation}
Together with the above inequality and the fact that the number of fanlike seeds is finite for any fixed Picard number  by Choi and Park \cite{CP_wedge_2}, the sequence $\{\cN_p\}$ is indeed well-defined.

Since any $1$-dimensional sphere corresponds to the boundary of a polygon, it immediately follows that $\cN(2,p) = \frac{(p-1)(p+2)}{2}$.
The aim of this section is to answer Batyrev's question.
On the one hand, we prove that this equality also holds in dimension three; namely, we show  $\cN(3,p) = \frac{(p-1)(p+2)}{2}$.
On the other hand, we provide explicit constructions demonstrating the existence of fanlike spheres for each $n \geq 4$ and $p \geq 4$ satisfying the strict inequality \eqref{Batyrev inequality}, hence suggesting that a new upper bound should be conjectured.

Let $K_1$ and $K_2$ be pure simplicial complexes.
Denote their vertex sets by $\cV(K_1)$ and $\cV(K_2)$.
Consider two facets $\sigma_1 \in K_1$ and $\sigma_2 \in K_2$, and identify them as a single subset $\sigma$, that is, let $\sigma = \sigma_1 = \sigma_2$.
Their \emph{connected sum} at facet $\sigma$ is the simplicial complex:
$$K_1 \#_\sigma K_2=(K_1 \cup K_2) \setminus \{\sigma\}.$$
Note that the connected sum of two (PL)~spheres is again a (PL)~sphere.
By the definition of a connected sum, a subset $\tau$ of~$\cV(K_i)$ that is not $\sigma$ is a face of~$K_i$ if and only if it is a face of~$K_1 \#_\sigma K_2$.
For any vertices~$v_1 \in \cV(K_1)$ and~$v_2 \in \cV(K_2)$ with $v_1, \, v_2 \not \in \sigma$, $\{v_1, \, v_2\}$ is not a face of~$K_1 \#_\sigma K_2$.
Thus, \begin{equation}\label{eq: connected sum}
            \cM(K_1 \#_\sigma K_2) = \{\sigma\} \cup \cM(K_1) \cup \cM(K_2) \cup \{\{v_1, \, v_2\} \mid v_1 \in \cV(K_1), v_2 \in \cV(K_2), v_1,v_2 \not \in \sigma\}
      \end{equation}
If $K_i$ for $i=1, \, 2$ is the boundary of a simplex, then its only minimal non-face~$\cV(K_i)$ of $K_i$ is no longer minimal.
In other cases, these unions are all disjoint.

\begin{proposition}
  Every $2$-dimensional PL~sphere $K$ with $p+3$ vertices satisfies $|\cM(K)| \leq \frac{(p-1)(p+2)}{2}$.
  In other words,
  $$
    \cN(3,p) = \frac{(p-1)(p+2)}{2}.
  $$
\end{proposition}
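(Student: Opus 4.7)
The plan is to proceed by induction on the Picard number $p$, leveraging the connected sum decomposition and the formula \eqref{eq: connected sum}. To set up, note that a $2$-sphere $K$ on $m=p+3$ vertices satisfies $f_1(K)=3(m-2)$ and $f_2(K)=2(m-2)$ (from $\chi(K)=2$ and $3f_2=2f_1$), so the number of non-edges equals $\binom{m}{2}-3(m-2)=p(p-1)/2$. For $m\ge 5$, any minimal non-face of cardinality at least $4$ would force a full $\partial\Delta^3$ as an induced subcomplex on its vertex set; since each edge of a $2$-sphere lies in exactly two $2$-faces, this subcomplex would be sealed off from the rest of $K$, forcing $K=\partial\Delta^3$ by connectedness and contradicting $m\ge 5$. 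Hence every minimal non-face of $K$ has size $2$ or $3$.

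I would first dispose of the \emph{prime} case, in which $K$ admits no minimal non-face of size $3$ (equivalently, every $3$-cycle of the $1$-skeleton spans a $2$-face of $K$). Here $|\cM(K)|$ coincides with the non-edge count $p(p-1)/2$, which is already bounded above by $(p-1)(p+2)/2$.

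For the non-prime case, a minimal non-face $\sigma=\{a,b,c\}$ of size $3$ produces an embedded triangular Jordan curve on the underlying $S^2$, separating $K$ into two triangulated disks $D_1, D_2$, each necessarily containing at least one interior vertex (otherwise $\sigma$ would span a $2$-face). Capping each disk with the facet $\sigma$ yields $2$-spheres $K_1,K_2$ with $p_1,p_2\ge 1$, $p_1+p_2=p$, and $K=K_1\#_\sigma K_2$. Applying \eqref{eq: connected sum}, together with the correction that sets $|\cM(K_i)|'$ to $0$ when $K_i=\partial\Delta^3$ and to $|\cM(K_i)|$ otherwise, gives
\[|\cM(K)|=1+|\cM(K_1)|'+|\cM(K_2)|'+p_1p_2.\]
Under the boundary convention $|\cM(\partial\Delta^3)|'=0=(1-1)(1+2)/2$, the inductive hypothesis $|\cM(K_i)|'\le (p_i-1)(p_i+2)/2$ holds uniformly for all $p_i\ge 1$; substituting and using the algebraic identity $(p_1-1)(p_1+2)+(p_2-1)(p_2+2)=p^2-2p_1p_2+p-4$, the $p_1p_2$ cross term cancels, yielding the desired bound $|\cM(K)|\le (p^2+p-2)/2=(p-1)(p+2)/2$.

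The main obstacle will be rigorously justifying the PL-topological claim that a $3$-cycle in the $1$-skeleton of a triangulated $2$-sphere separates $S^2$ into two triangulated disks with nonempty interior, so that each capped-off $K_i$ is a strictly smaller triangulated $2$-sphere. Once this decomposition is in hand, the inductive computation is a routine cancellation, and the boundary correction in \eqref{eq: connected sum} must be carefully propagated so the hypothesis applies uniformly at $p_i=1$.
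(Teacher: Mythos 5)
Your proof is correct and follows essentially the same route as the paper's: induction on $p$, a case split on whether $K$ has a three-element minimal non-face, decomposition as a connected sum together with \eqref{eq: connected sum} in that case, and a direct count of non-edges otherwise. The only differences are cosmetic --- you obtain the exact non-edge count $p(p-1)/2$ from Euler's formula where the paper uses a minimum-degree-four bound to get $(p+3)(p-2)/2$, and you absorb the base case into the convention $|\cM(\partial\Delta^3)|'=0$ where the paper checks $p=2$ explicitly.
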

\begin{proof}
We prove this by induction on the Picard number of $K$.
It is known that the only $2$-sphere $K$ with Picard number~$2$ is the connected sum of the boundaries of two $3$-simplices.
By \eqref{eq: connected sum}, $K$ has two minimal non-faces: one of cardinality $2$ and one of cardinality $3$.
Hence $\cM(K)=2 \leq (2-1)(2+2)/2 = 2.$

Fix $p > 2$
Assume that for each $1< q < p$, every $2$-dimensional PL~sphere $L$ with Picard number~$q$ satisfies $$|\cM(L)| \leq \frac{(q-1)(q+2)}{2}.$$
Let $K$ be a $2$-dimensional PL~sphere with Picard number~$p$.
Since $p>2$, $K$ is not the boundary of a $3$-simplex, so every minimal non-face of $K$ has cardinality less than $4$.
Then, we consider two cases: $K$ has a minimal non-face of size $3$, or every minimal non-face of $K$ has size $2$.
\medskip

If $K$ has a minimal non-face $\sigma$ of size $3$, then $K$ is the connected sum of two PL~spheres $K_1$ with Picard number~$p_1$ and $K_2$ with Picard number~$p_2$ at $\sigma$, where $p=p_1 + p_2$.

By the induction hypothesis and \eqref{eq: connected sum}, \begin{align*}
                 |\cM(K)| &\leq 1 + \frac{(p_1-1)(p_1+2)}{2} + \frac{(p_2-1)(p_2+2)}{2} + (\cV(K_1)-3)(\cV(K_2)-3) \\
                        &= \frac{(p-1)(p+2)}{2}.
               \end{align*}
\medskip

If $K$ has only minimal non-faces of size $2$, then every minimal non-face consists of the end points of a diagonal passing through the interior of the polytope associated with $K$.
Observe that a vertex $v$ is contained in at least $4$ edges, otherwise, the other three vertices that are connected to $v$ by edges form a minimal non-face since $K$ is not a simplex.
Then
$$
    |\cM(K)| \leq \frac{|\cV(K)|(|\cV(K)|-5)}{2} =\frac{(p+3)(p-2)}{2} \leq \frac{(p-1)(p+2)}{2}.
$$
\end{proof}

Let $K$ be an $(n-1)$-dimensional PL~sphere with Picard number~$p$, and $\partial \Delta^n$ denote the boundary of an $n$-simplex that shares a facet $\sigma$ with $K$.
Then the Picard number of $K \#_\sigma \partial \Delta^n$ is $p+1$ and $|\cM(K \#_\sigma \partial \Delta^n)| = 1 + |\cM(K)| + p$.
Thus, we have
\begin{equation} \label{eqn:N(n,p+1)}
  \cN(n,p+1) \geq \cN(n,p) + (p+1).
\end{equation}

\begin{proposition} \label{prop:answer_Batyrev}
  For any $p \geq 4$ and $n \geq 4$, there is an $(n-1)$-dimensional fanlike sphere with $n+p$ vertices that satisfies~\eqref{Batyrev inequality}.
  In other words,
  $$
    \cN_p \geq \cN(n,p) > \frac{(p-1)(p+2)}{2}.
  $$
\end{proposition}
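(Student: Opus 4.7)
The plan is to exhibit, for each pair $(n,p)$ with $n \geq 4$ and $p \geq 4$, an explicit fanlike $(n-1)$-sphere $K_{n,p}$ with $n+p$ vertices satisfying \eqref{Batyrev inequality}, built by augmenting a single fanlike seed of small parameters. The base ingredient is the fanlike seed $\cK^{3}_{11}$ from Section~\ref{section: fans list}, which has dimension $3$, Picard number $4$, eight vertices, and $|\cM(\cK^{3}_{11})| = 11$; since $(4-1)(4+2)/2 = 9 < 11$, the case $(n,p) = (4,4)$ is already settled.

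First I would raise the dimension while preserving the Picard number by iterating the wedge operation. Applying $\wed$ successively $n-4$ times to $\cK^{3}_{11}$ yields a PL-sphere $K_{n,4}$ of dimension $n-1$ and Picard number $4$; by Theorem~\ref{theorem: wedge fan-giving} fan-likeness propagates through wedge operations. A direct inspection of the definition of $\wed_v$ shows that every minimal non-face $F$ of the base either remains unchanged (when $v \notin F$) or is replaced by $F \cup \{v'\}$ (when $v \in F$), so $|\cM(K_{n,4})| = |\cM(\cK^{3}_{11})| = 11$, already beating the required bound for $p=4$.

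Next I would raise the Picard number by iterating the connected sum with the boundary of an $n$-simplex at a facet. Given a fanlike sphere $K$ with fan-giving characteristic map $\lambda$ and a facet $\sigma$, apply a D-J equivalence so that $\lambda_\sigma = I_n$, and then adjoin a new vertex $w$ with $\lambda_w = e_1 + \cdots + e_n$. The resulting characteristic pair over $K \#_\sigma \partial\Delta^n$ is exactly the stellar subdivision of the cone $\mathrm{pos}(\sigma)$ into $n$ unimodular subcones. Fan-likeness of the new pair follows from Lemma~\ref{lemma: fan-givingness}: the only new kernel relations come from the identity $\lambda_w - \lambda_{v_1} - \cdots - \lambda_{v_n} = 0$, which cannot meet both forbidden sign conditions simultaneously because $\lambda_w$ lies in the strict interior of $\mathrm{pos}(\sigma)$. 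Iterating this step $p-4$ times from $K_{n,4}$ produces the desired fanlike $(n-1)$-sphere $K_{n,p}$ with Picard number $p$ and $n+p$ vertices.

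Finally, a short induction using the identity $|\cM(K \#_\sigma \partial\Delta^n)| = 1 + |\cM(K)| + p$ gives
\[
|\cM(K_{n,p})| = 11 + \sum_{k=4}^{p-1}(1+k) = \frac{p^{2}+p+2}{2} = \frac{(p-1)(p+2)}{2} + 2,
\]
which strictly exceeds the claimed bound. The main obstacle is the fan-likeness verification at each stellar subdivision step: although this is classical in toric geometry, it must be confirmed within this paper's combinatorial framework by a direct application of Lemma~\ref{lemma: fan-givingness} to the enlarged vertex set, which is the only non-routine point in the argument.
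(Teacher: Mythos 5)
Your proposal is correct and follows essentially the same route as the paper: start from the seed $\cK^{3}_{11}$ with $|\cM(\cK^{3}_{11})|=11>9$, raise $n$ by wedging (which preserves the Picard number and the number of minimal non-faces, giving $\cN(n+1,p)\geq\cN(n,p)$), and raise $p$ by connected sum with $\partial\Delta^n$ at a facet (giving $\cN(n,p+1)\geq\cN(n,p)+(p+1)$, which exactly matches the increment of the bound $\tfrac{(p-1)(p+2)}{2}$). The only difference is that you spell out the fan-givingness of the stellar-subdivision step via Lemma~\ref{lemma: fan-givingness}, which the paper leaves implicit by appealing to Ewald's non-singular stellar subdivision.
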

\begin{proof}
In the list of fanlike seeds in the previous section, there is a $3$-dimensional fanlike seed~$K$ with $|\cM(K)| > (4-1)(4+2)/2 = 9$.
For instance, $|\cM(K^3_{11})| = 11$.
Therefore, the theorem follows immediately by~\eqref{eqn:N(n+1,p)} and~\eqref{eqn:N(n,p+1)}.
\end{proof}

In Table~\ref{table:boundBatyrev}, we display $\cN(n,p)$ and $\cN_p$ for small $n$ and $p$.
\begin{table} [ht]
	\begin{tabular}{l*{9}{c}}
		\toprule
		$p$ & $\cN(2,p)$ & $\cN(3,p)$ & $\cN(4,p)$ & $\cN(5,p)$ & $\cN(6,p)$ & $\cN(7,p)$ & $\cN(8,p)$ & $\cdots$ & $\cN_p$ \\
		\midrule
		$2$ & 2 & 2 & 2  &2  &2  &2  &2  &  & 2 \\
		$3$ & 5 & 5 & 5 & 5 &5  &5  &5  &  & 5 \\
		$4$ & 9 & 9 & 11 & 12 &12  & 12 &12  &  &12  \\
		$5$ & 14 & 14 & $\geq 16$ & $\geq 17$ & $\geq 17$ & $\geq 17$ & $\geq 17$ &  & $\geq 17$ \\
		\bottomrule
	\end{tabular}
	\caption{$\cN(n,p)$ and $\cN_p$} \label{table:boundBatyrev}
\end{table}

\subsection{Neighborly and flag fanlike spheres}
A simplicial complex on $m$ vertices is said to be \emph{$k$-neighborly} if it has the same $(k-1)$-skeleton as the $(m-1)$-simplex on the same vertex set.
In other words, every $k$-set of the vertex set belongs to the complex.
One particular class is forled by the $\lfloor \frac{n}{2} \rfloor$-neighborly $(n-1)$-spheres, which are often simply referred to as \emph{neighborly} spheres.
Such complexes appear, for instance, in the context of Stanley's upper bound theorem~\cite{Stanley1975}.

In their paper~\cite{Gretenkort-Kleinschmidt-Sturmfels1990}, Gretenkort, Kleinschmidt, and Sturmfels investigated the existence of toric manifolds over certain neighborly polytopes.
At the end of their paper, they conjectured that, for $n \geq 4$, no neighborly $n$-polytope with $n+3$ or more vertices gives rise to a toric manifold.
The motivation behind this conjecture was the strong combinatorial complexity required for a PL~sphere to be both neighborly and fanlike.
Despite its apparent simplicity, the conjecture remained open for over three decades.

It is worth noting that for an $(n-1)$-dimensional simplicial complex with $n \leq 5$, it is neighborly if and only if no pair of vertices fails to span an edge.
Therefore, the neighborliness condition is equivalent to the absence of minimal non-faces of size two.
Among the $59$ fanlike seeds with Picard number~$4$ classified in this paper, we have identified three that are neighborly: $\cK^4_{24}$, $\cK^4_{27}$, and $\cK^4_{28}$.
Indeed, the set of minimal non-faces for each of these complexes contains no element of size two, while their dimensions are all~$4$ (that is, $n=5$).
Furthermore, according to the list of polytopal $4$-spheres with $9$ vertices obtained by Fukuda et al \cite{Fukuda2013Complete}, all three are polytopal.
These provide explicit counterexamples to the conjecture of Gretenkort–Kleinschmidt–Sturmfels.
In addition, we establish even stronger results, as presented below.

\begin{proposition} \label{prop:ans_GKS}
    There exist both projective and non-projective toric manifolds supported by neighborly polytopes.
\end{proposition}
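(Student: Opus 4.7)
The plan is to prove the proposition by direct inspection of the three neighborly fanlike seeds with Picard number~$4$ identified in Section~\ref{section: fans list}, namely $\cK^4_{24}$, $\cK^4_{27}$, and $\cK^4_{28}$. Since Fukuda et al.\ have verified that each of these simplicial $4$-spheres is polytopal, every toric manifold supported on them automatically lives over a neighborly polytope, so it suffices to exhibit explicit specialisations of the characteristic maps listed in Subsections~4.24, 4.27, and~4.28 producing, on the one hand, at least one projective toric manifold and, on the other, at least one non-projective one.

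First I would recall the standard combinatorial criterion for projectivity: a fan-giving pair $(K,\lambda)$ corresponds to a projective toric manifold if and only if there exist real numbers $h_v$, one per vertex~$v$ of $K$, such that the piecewise linear function $\psi$ with $\psi(\lambda_v)=h_v$ is strictly convex across every wall. Each wall shared by two adjacent facets contributes exactly one strict linear inequality in the unknowns $\{h_v\}$, so projectivity reduces to a finite linear feasibility problem. With this criterion in hand, I would substitute small integer values into the indeterminates of each characteristic map and run the feasibility check. For $\cK^4_{27}$, whose unique fan-giving map contains no free parameter, the check is conclusive in a single pass; for $\cK^4_{24}$ and $\cK^4_{28}$, a short search over small specialisations (for instance all indeterminates equal to $0$, then a few choices of $\pm 1$) should readily yield both a projective and a non-projective example.

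A feasible specialisation is certified by displaying a height vector $(h_v)$ that is strictly convex across every wall, while an infeasible one is certified via Farkas' lemma by exhibiting an explicit non-negative dependence among the wall inequalities, that is, an Oda-type circuit, analogous to the obstruction realising the classical non-projective Oda manifold on $\cK^2_2$. The main obstacle is certifying non-projectivity robustly: projectivity only requires producing a single support function, whereas non-projectivity requires ruling out every possible one, so the Farkas witness must be extracted explicitly from the primitive ray matrix and verified by hand. Once a strictly convex support function is exhibited on one choice of parameters and an Oda circuit is exhibited on another (possibly within a different seed among the three), both clauses of Proposition~\ref{prop:ans_GKS} follow immediately.
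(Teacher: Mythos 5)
Your proposal follows essentially the same route as the paper: identify the three neighborly fanlike seeds $\cK^4_{24}$, $\cK^4_{27}$, $\cK^4_{28}$, invoke their polytopality from Fukuda et al., and then decide projectivity of the supported toric manifolds via the strictly convex support function criterion (the paper simply asserts the outcome --- the manifolds over $\cK^4_{27}$ are projective and those over $\cK^4_{28}$ are non-projective --- and omits the verification you describe). Your plan is a correct and somewhat more explicit account of the computation the authors performed.
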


This follows from the fact that the two toric manifolds supported by $\cK^4_{27}$ are projective, whereas the four toric manifolds supported by $\cK^4_{28}$ are non-projective.
We omit the detailed proof.

On the other hand, at the opposite end of the spectrum from neighborly spheres are the flag simplicial complexes.
A simplicial complex is said to be \emph{flag} if every minimal non-face has size two; equivalently, it is the clique complex of its $1$-skeleton.
Flag complexes are of particular interest because if a toric manifold is supported by a flag complex, then its real locus, called a \emph{real toric manifold}, admits an aspherical geometric structure~\cite{Davis-Januszkiewicz-Scott1998, Uma2004, Davis2012}.
This makes flag complexes an important subclass in the study of toric topology.

While the likelihood of a simplicial complex being flag decreases as the Picard number becomes smaller, there still exist flag complexes with Picard number~$4$.
Among the fanlike seeds classified in this paper, our list reveals that exactly three of them are both flag and fanlike: $\cK^1_0$ (the boundary of a hexagon), $\cK^2_3$ (the boundary of a pentagonal bipyramid), and $\cK^3_5$ (the boundary of a $4$-crosspolytope).

\providecommand{\bysame}{\leavevmode\hbox to3em{\hrulefill}\thinspace}
\providecommand{\MR}{\relax\ifhmode\unskip\space\fi MR }
\providecommand{\MRhref}[2]{%
  \href{http://www.ams.org/mathscinet-getitem?mr=#1}{#2}
}
\providecommand{\href}[2]{#2}


\begin{thebibliography}{10}

\bibitem{Ayzenberg2024}
Anton Ayzenberg, \emph{Toric manifolds over 3-polytopes}, Toric topology and
  polyhedral products, Fields Inst. Commun., vol.~89, Springer, Cham, [2024]
  \copyright 2024, pp.~13--25. \MR{4789590}

\bibitem{BBCG2015}
A.~Bahri, M.~Bendersky, F.~R. Cohen, and S.~Gitler, \emph{Operations on
  polyhedral products and a new topological construction of infinite families
  of toric manifolds}, Homology Homotopy Appl. \textbf{17} (2015), no.~2,
  137--160. \MR{3426378}

\bibitem{Baralic-Milenkovic2022}
Djordje Barali\'c and Lazar Milenkovi\'c, \emph{Small covers and quasitoric
  manifolds over neighborly polytopes}, Mediterr. J. Math. \textbf{19} (2022),
  no.~2, Paper No. 87, 31. \MR{4396819}

\bibitem{Barnette1973}
David Barnette, \emph{The triangulations of the {$3$}-sphere with up to {$8$}
  vertices}, J. Combinatorial Theory Ser. A \textbf{14} (1973), 37--52.
  \MR{312511}

\bibitem{Batyrev1999}
V.~V. Batyrev, \emph{On the classification of toric {F}ano {$4$}-folds},
  vol.~94, 1999, Algebraic geometry, 9, pp.~1021--1050. \MR{1703904}

\bibitem{Batyrev1991}
Victor~V. Batyrev, \emph{On the classification of smooth projective toric
  varieties}, Tohoku Math. J. (2) \textbf{43} (1991), no.~4, 569--585.
  \MR{1133869}

\bibitem{Bogart-Haase-Hering2015}
Tristram Bogart, Christian Haase, Milena Hering, Benjamin Lorenz, Benjamin
  Nill, Andreas Paffenholz, G\"unter Rote, Francisco Santos, and Hal Schenck,
  \emph{Finitely many smooth {$d$}-polytopes with {$n$} lattice points}, Israel
  J. Math. \textbf{207} (2015), no.~1, 301--329. \MR{3358048}

\bibitem{Buchstaber-Panov2015}
Victor~M. Buchstaber and Taras~E. Panov, \emph{Toric topology}, Mathematical
  Surveys and Monographs, vol. 204, American Mathematical Society, Providence,
  RI, 2015. \MR{3363157}

\bibitem{Choi-Jang-Vallee2024}
Suyoung Choi, Hyeontae Jang, and Mathieu Vall\'ee, \emph{The characterization
  of {$(n-1)$}-spheres with {$n + 4$} vertices having maximal {B}uchstaber
  number}, J. Reine Angew. Math. \textbf{811} (2024), 267--292. \MR{4751123}

\bibitem{Choi-Masuda-Suh2010}
Suyoung Choi, Mikiya Masuda, and Dong~Youp Suh, \emph{Quasitoric manifolds over
  a product of simplices}, Osaka J. Math. \textbf{47} (2010), no.~1, 109--129.
  \MR{2666127}

\bibitem{Choi-Masuda-Suh2010TAMS}
\bysame, \emph{Topological classification of generalized {B}ott towers}, Trans.
  Amer. Math. Soc. \textbf{362} (2010), no.~2, 1097--1112. \MR{2551516}

\bibitem{Choi-Park2016}
Suyoung Choi and Hanchul Park, \emph{Wedge operations and torus symmetries},
  Tohoku Math. J. (2) \textbf{68} (2016), no.~1, 91--138. \MR{3476138}

\bibitem{CP_CP_variety}
\bysame, \emph{Wedge operations and a new family of projective toric
  manifolds}, Israel J. Math. \textbf{219} (2017), no.~1, 353--377.
  \MR{3642025}

\bibitem{CP_wedge_2}
\bysame, \emph{Wedge {O}perations and {T}orus {S}ymmetries {II}}, Canad. J.
  Math. \textbf{69} (2017), no.~4, 767--789. \MR{3679694}

\bibitem{Choi-Park2016IJM}
Suyoung Choi and Seonjeong Park, \emph{Projective bundles over toric surfaces},
  Internat. J. Math. \textbf{27} (2016), no.~4, 1650032, 30. \MR{3491049}

\bibitem{Cox-Little-Schenck2011}
David~A. Cox, John~B. Little, and Henry~K. Schenck, \emph{Toric varieties},
  Graduate Studies in Mathematics, vol. 124, American Mathematical Society,
  Providence, RI, 2011. \MR{2810322}

\bibitem{Cox-von2009}
David~A. Cox and Christine von Renesse, \emph{Primitive collections and toric
  varieties}, Tohoku Math. J. (2) \textbf{61} (2009), no.~3, 309--332.
  \MR{2568257}

\bibitem{Davis-Januszkiewicz-Scott1998}
M.~Davis, T.~Januszkiewicz, and R.~Scott, \emph{Nonpositive curvature of
  blow-ups}, Selecta Math. (N.S.) \textbf{4} (1998), no.~4, 491--547.
  \MR{1668119}

\bibitem{Davis2012}
Michael~W. Davis, \emph{Right-angularity, flag complexes, asphericity}, Geom.
  Dedicata \textbf{159} (2012), 239--262. \MR{2944529}

\bibitem{Davis-Januszkiewicz1991}
Michael~W. Davis and Tadeusz Januszkiewicz, \emph{Convex polytopes, {C}oxeter
  orbifolds and torus actions}, Duke Math. J. \textbf{62} (1991), no.~2,
  417--451. \MR{1104531 (92i:52012)}

\bibitem{Delaunay2005}
Claire Delaunay, \emph{On hyperbolicity of toric real threefolds}, Int. Math.
  Res. Not. (2005), no.~51, 3191--3201. \MR{2187505}

\bibitem{Dobrinskaya2001}
N.~\`E. Dobrinskaya, \emph{The classification problem for quasitoric manifolds
  over a given polytope}, Funktsional. Anal. i Prilozhen. \textbf{35} (2001),
  no.~2, 3--11, 95. \MR{1850429}

\bibitem{ewald1996combinatorial}
G\"{u}nter Ewald, \emph{Combinatorial convexity and algebraic geometry},
  Graduate Texts in Mathematics, vol. 168, Springer-Verlag, New York, 1996.
  \MR{1418400}

\bibitem{Fukuda2013Complete}
Komei Fukuda, Hiroyuki Miyata, and Sonoko Moriyama, \emph{Complete enumeration
  of small realizable oriented matroids}, Discrete Comput. Geom. \textbf{49}
  (2013), no.~2, 359--381. \MR{3017917}

\bibitem{fulton1993toric}
William Fulton, \emph{Introduction to toric varieties}, Annals of Mathematics
  Studies, vol. 131, Princeton University Press, Princeton, NJ, 1993, The
  William H. Roever Lectures in Geometry. \MR{1234037}

\bibitem{Gretenkort-Kleinschmidt-Sturmfels1990}
J\"org Gretenkort, Peter Kleinschmidt, and Bernd Sturmfels, \emph{On the
  existence of certain smooth toric varieties}, Discrete Comput. Geom.
  \textbf{5} (1990), no.~3, 255--262. \MR{1036874}

\bibitem{Grunbaum-Sreedharan1967}
Branko Gr\"unbaum and V.~P. Sreedharan, \emph{An enumeration of simplicial
  {$4$}-polytopes with {$8$} vertices}, J. Combinatorial Theory \textbf{2}
  (1967), 437--465. \MR{215182}

\bibitem{HKMP2020}
Sho Hasui, Hideya Kuwata, Mikiya Masuda, and Seonjeong Park,
  \emph{Classification of toric manifolds over an {$n$}-cube with one vertex
  cut}, Int. Math. Res. Not. IMRN (2020), no.~16, 4890--4941. \MR{4139029}

\bibitem{Hattori-Masuda2003}
Akio Hattori and Mikiya Masuda, \emph{Theory of multi-fans}, Osaka J. Math.
  \textbf{40} (2003), no.~1, 1--68. \MR{1955796}

\bibitem{Kleinschmidt1988}
Peter Kleinschmidt, \emph{A classification of toric varieties with few
  generators}, Aequationes Math. \textbf{35} (1988), no.~2-3, 254--266.
  \MR{954243}

\bibitem{Kleinschmidt-Sturmfels1991}
Peter Kleinschmidt and Bernd Sturmfels, \emph{Smooth toric varieties with small
  {P}icard number are projective}, Topology \textbf{30} (1991), no.~2,
  289--299. \MR{1098923}

\bibitem{Oda1988}
Tadao Oda, \emph{Convex bodies and algebraic geometry}, Ergebnisse der
  Mathematik und ihrer Grenzgebiete (3) [Results in Mathematics and Related
  Areas (3)], vol.~15, Springer-Verlag, Berlin, 1988, An introduction to the
  theory of toric varieties, Translated from the Japanese. \MR{922894}

\bibitem{Orlik-Raymond1970}
Peter Orlik and Frank Raymond, \emph{Actions of the torus on {$4$}-manifolds.
  {I}}, Trans. Amer. Math. Soc. \textbf{152} (1970), 531--559. \MR{268911}

\bibitem{Stanley1975}
Richard~P. Stanley, \emph{The upper bound conjecture and {C}ohen-{M}acaulay
  rings}, Studies in Appl. Math. \textbf{54} (1975), no.~2, 135--142.
  \MR{458437}

\bibitem{Suyama2014}
Yusuke Suyama, \emph{Examples of smooth compact toric varieties that are not
  quasitoric manifolds}, Algebr. Geom. Topol. \textbf{14} (2014), no.~5,
  3097--3106. \MR{3276857}

\bibitem{Suyama2015}
\bysame, \emph{Simplicial 2-spheres obtained from non-singular complete fans},
  Dal'nevost. Mat. Zh. \textbf{15} (2015), no.~2, 277--287. \MR{3594002}

\bibitem{Uma2004}
V.~Uma, \emph{On the fundamental group of real toric varieties}, Proc. Indian
  Acad. Sci. Math. Sci. \textbf{114} (2004), no.~1, 15--31. \MR{2040597}

\bibitem{Wiemeler2015}
Michael Wiemeler, \emph{Torus manifolds and non-negative curvature}, J. Lond.
  Math. Soc. (2) \textbf{91} (2015), no.~3, 667--692. \MR{3355120}

\bibitem{Yu-Masuda2021}
Li~Yu and Mikiya Masuda, \emph{On descriptions of products of simplices},
  Chinese Ann. Math. Ser. B \textbf{42} (2021), no.~5, 777--790. \MR{4316824}

\end{thebibliography}
\end{document}